\theoremstyle{plain}
\newtheorem{thm}{Theorem}[section]
\newtheorem*{thm*}{Theorem}
\newtheorem*{cor*}{Corollary}
\newtheorem{prop}[thm]{Proposition}
\newtheorem{lem}[thm]{Lemma}
\newtheorem{cor}[thm]{Corollary}
\newtheorem*{claim*}{Claim}
\theoremstyle{definition}
\newtheorem{defn}[thm]{Definition}
\newtheorem{ex}[thm]{Example}
\newtheorem{rem}[thm]{Remark}
\newtheorem{conj}[thm]{Conjecture}
\newtheorem{quest}[thm]{Question}
\newtheorem*{acknowledgement}{Acknowledgement}
\theoremstyle{remark}
\numberwithin{equation}{thm}
\def\Assh{\operatorname{Assh}}
\def\Soc{\mathrm{Soc}}
\def\rank{\mathrm{rank}}
\def\e{\mathrm{e}}
\def\m{\mathfrak m}
\def\K{\mathrm{K}}
\newcommand{\rma}{\mathrm{a}}
\newcommand{\rme}{\mathrm{e}}
\newcommand{\rmK}{\mathrm{K}}
\newcommand{\calR}{\mathcal{R}}
\newcommand{\fka}{\mathfrak{a}}
\newcommand{\fkm}{\mathfrak{m}}
\newcommand{\fkM}{\mathfrak{M}}
\newcommand{\mapright}[1]{%
\smash{\mathop{%
\hbox to 1cm{\rightarrowfill}}\limits^{#1}}}
\newcommand{\mapleft}[1]{%
\smash{\mathop{%
\hbox to 1cm{\leftarrowfill}}\limits_{#1}}}
\def\ann{\operatorname{Ann}}
\def\height{\mathrm{ht}}
\def\Spec{\operatorname{Spec}}
\begin{document}

\setlength{\baselineskip}{14.5pt}
\title[Almost Gorenstein Rees algebras]{Almost Gorenstein  Rees algebras of $p_g$-ideals, good ideals, and powers of the maximal ideals}

\author{Shiro Goto}
\address{Department of Mathematics, School of Science and Technology, Meiji University, 1-1-1 Higashi-mita, Tama-ku, Kawasaki 214-8571, Japan}
\email{shirogoto@gmail.com}

\author{Naoyuki Matsuoka}
\address{Department of Mathematics, School of Science and Technology, Meiji University, 1-1-1 Higashi-mita, Tama-ku, Kawasaki 214-8571, Japan}
\email{naomatsu@meiji.ac.jp}

\author{Naoki Taniguchi}
\address{Department of Mathematics, School of Science and Technology, Meiji University, 1-1-1 Higashi-mita, Tama-ku, Kawasaki 214-8571, Japan}
\email{taniguchi@meiji.ac.jp}

\author{Ken-ichi Yoshida}
\address{Department of Mathematics, College of Humanities and Sciences, Nihon University, 3-25-40 Sakurajosui, Setagaya-Ku, Tokyo 156-8550, Japan}
\email{yoshida@math.chs.nihon-u.ac.jp}

\thanks{2010 {\em Mathematics Subject Classification.} 13H10, 13H15, 13A30}
\thanks{{\em Key words and phrases.} almost Gorenstein local ring, almost Gorenstein graded ring, regular local ring, Rees algebra, $p_g$-ideal, good ideal}

\begin{abstract}
Let $(A,\m)$ be a Cohen-Macaulay local ring and let $I$ be an ideal of $A$.
We prove that the Rees algebra $\calR(I)$ is an almost Gorenstein
ring in the following cases:
\begin{enumerate}
\item[(1)] $(A,\m)$ is a two-dimensional excellent Gorenstein normal domain over an algebraically closed field $K \cong A/\m$ and
$I$ is a $p_g$-ideal;
\item[(2)] $(A,\m)$ is a two-dimensional almost Gorenstein local ring having minimal multiplicity and $I=\m^{\ell}$ for all $\ell \ge 1$;
\item[(3)] $(A,\m)$ is a regular local ring of dimension
$d \ge 2$ and $I=\m^{d-1}$. Conversely, if  $\calR(\m^{\ell})$ is an almost Gorenstein graded ring for some $\ell \ge 2$ and $d \ge 3$, then $\ell=d-1$.
\end{enumerate}
\end{abstract}

\maketitle
\section{Introduction}\label{intro}
In \cite{GMTY2}, the authors proved that
for any $\m$-primary integrally closed ideal $I$
in a two-dimensional regular local ring $(A,\m)$,
its Rees algebra $\calR(I)$ is an almost Gorenstein graded ring.
As a direct consequence, we have that the Rees algebra
$\calR(\m^{\ell})$ is an almost Gorenstein graded ring
for every integer $\ell \ge 1$.
The main purpose of this paper is to extend these results
to other classes of rings and ideals.

The notion of almost Gorenstein rings in our sense was
introduced by Barucci and Fr\"oberg \cite{BF},
where they dealt with one-dimensional analytically unramified local rings.
Goto, Matsuoka, and Phuong  \cite{GMP} extended
the notion to arbitrary (but still of dimension one) Cohen-Macaulay local rings.
Goto, Takahashi, and Taniguchi \cite{GTT}
gave the definition of
almost Gorenstein graded/local rings for higher-dimensional cases.

Let us recall the definition of almost Gorenstein rings.

\begin{defn}[{Goto et al.\ \cite[Definition 3.3]{GTT}}]
\label{AGlocal-def} Let $(A,\m)$ be a Cohen-Macaulay local ring that possesses the canonical module $\K_A$.
Then $A$ is said to be an \textit{almost Gorenstein local ring}, if there exists an exact sequence
\[
0 \to A \to \mathrm{K}_A \to C \to 0
\]
of $A$-modules such that $\mu_A(C) = \e_\m^0(C)$,
where $\mu_A(C)$ denotes the number of elements in a minimal system of generators of $C$ and $\e_\m^0(C)$
is the multiplicity of $C$ with respect to $\m$.
Note that such an $A$-module $C$ is called an {\it Ulrich}
$A$-module; see e.g.\ \cite[Section 2]{GTT}.
\end{defn}

Let $R = \bigoplus_{n \ge 0}R_n$ be a Cohen-Macaulay graded ring such that  $R_0=A$ is a local ring.
Suppose that $R$ possesses the graded canonical module $\K_R$.
Then $\rma (R)=-\min\{n \in \mathbb{Z} \mid  [\K_R]_n \ne 0\}$ is called the \textit{$\rma$-invariant} of $R$; see
e.g.\ \cite[Definition 3.14]{GW}.

\begin{defn}[{Goto et al.\ \cite[Definition 8.1]{GTT}}]
\label{AGgraded-def}
Let $R$ be as above.
Let $\fkM$ be the unique graded maximal ideal of $R$
and set $a=\rma(R)$.
Then $R$ is said to be an \textit{almost Gorenstein graded ring}, if there exists an exact sequence
\[
0 \to R \to \mathrm{K}_R(-a) \to C \to 0
\]
of graded $R$-modules such that  $\mu_R(C) = \e_\fkM^0(C)$, where $\mu_R(C)$ denotes the number of elements in a minimal system of generators of $C$ and $\e_\fkM^0(C)$ is the multiplicity of $C$ with respect to $\fkM$.
Here $\K_R(-a)$ denotes the graded $R$-module whose underlying $R$-module is the same as that of $\K_R$ whose grading is given by $[\K_R(-a)]_n = [\K_R]_{n-a}$ for all $n \in \mathbb{Z}$.
\end{defn}

Note that the local ring $R_{\fkM}$ is almost Gorenstein if $R$ is an almost Gorenstein graded ring, because $C_{\fkM}$ is an Ulrich $R_{\fkM}$-module and $\mathrm{K}_{R_{\fkM}} \cong [\mathrm{K}_R]_{\fkM}$. Unfortunately, the converse is not true in general (see e.g.\ \cite[Theorems 2.7 and 2.8]{GMTY2} and \cite[Example 8.8]{GTT}).

Any Gorenstein local ring is an almost Gorenstein local ring.
Any rational singularity in dimension two is an
almost Gorenstein local ring (see \cite[Section 11]{GTT}).
All known examples of
Cohen-Macaulay local rings of finite representation
type are almost Gorenstein local rings
(see \cite[Section 12]{GTT}).
Moreover, a numerical semigroup ring $k[[H]]$ is
an almost Gorenstein ring if and only if $H$ is an almost
symmetric semigroup (see \cite{BF}).
Note that the notion of almost Gorenstein rings
in our sense is different from that in \cite{HV}.

Moreover, the following results are known
as examples of
higher-dimensional almost Gorenstein rings:
for a parameter ideal $Q$ in a regular local ring $A$ of
dimension $d \ge 3$:
\begin{enumerate}
\item[(1)] the Rees algebra
$\calR(Q)=\bigoplus_{n \ge 0} Q^n$ is an almost Gorenstein
graded ring if and only if $Q=\m$ (see \cite[Theorem 8.3]{GTT});
\item[(2)] $\calR(Q)_{\fkM}$
is always an almost Gorenstein local ring, where
$\fkM$ denotes the unique graded maximal ideal of $\calR(Q)$ (see \cite{GMTY1}).
\end{enumerate}

The main results in this paper are the following theorems,
which are extensions of the main result in \cite{GMTY2}.
Note that any $\m$-primary integrally closed ideal $I$ in a two-dimensional
excellent regular local ring $A$ over
an algebraically closed field satisfies
the assumption in the following theorem.
Thus, this theorem essentially extends
\cite[Theorem 1.3]{GMTY2}.

Now let $(A,\m)$ be a two-dimensional excellent
normal local ring over an algebraically closed field.
For an $\m$-primary ideal $I \subset A$,
$I$ is called a {\it $p_g$-ideal} (see \cite{OWY1}),
if the Rees algebra
$\calR(I)$ is a Cohen-Macaulay normal domain;
see the following section for the definition and basic
properties of $p_g$-ideals.

\begin{thm}[See Theorem \ref{Sect2-thm}]
\label{Main-PGRees}
Let $(A,\m)$ be a two-dimensional excellent Gorenstein
normal local ring over an algebraically closed field
and let $I \subset A$ be a $p_g$-ideal.
Then the Rees algebra $\calR(I)$ of $I$ is an almost Gorenstein graded ring.
\end{thm}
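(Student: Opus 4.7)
The plan is to construct an exact sequence
\[
0\to \calR(I)\xrightarrow{\,\cdot\xi\,}\rmK_{\calR(I)}(-a)\to C\to 0
\]
of graded $\calR(I)$-modules, where $a=\rma(\calR(I))$, and to verify that the cokernel $C$ is an Ulrich module in the sense of Definition \ref{AGgraded-def}.

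First I would pin down the $a$-invariant. The $p_g$-hypothesis gives that $\calR(I)$ is a three-dimensional Cohen-Macaulay normal domain in which every power $I^n$ is integrally closed. The Goto-Shimoda criterion in dimension two then forces $I^{n+1}=QI^n$ for every $n\ge 1$ and any minimal reduction $Q=(a_1,a_2)$ of $I$, and combined with the standard comparison between $\rma(\calR(I))$ and $\rma(\gr_I(A))$ this yields $\rma(\calR(I))=-1$.

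Next I would describe the graded canonical module of $\calR(I)$. Because $A$ is Gorenstein of dimension two we have $\rmK_A\cong A$, and the standard description of the canonical module of a two-dimensional Cohen-Macaulay Rees algebra, together with the integral closedness of each $I^n$ and the $p_g$-cancellation $\overline{I\cdot J}=I\cdot J$ for integrally closed $J$, identifies $\rmK_{\calR(I)}$ with a graded divisorial fractional ideal
\[
\rmK_{\calR(I)}=\bigoplus_{n\ge 1}J_n\,t^n
\]
of $\calR(I)$, for certain divisorial ideals $J_n\subseteq A$ with $I\cdot J_n\subseteq J_{n+1}$. Choosing a suitable homogeneous element $\xi\in[\rmK_{\calR(I)}]_1$, multiplication by $\xi$ provides an injection $\calR(I)\hookrightarrow \rmK_{\calR(I)}(1)$ whose cokernel is the $C$ appearing in the sequence above.

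The central step, where the argument is most delicate, is the verification that $C$ is Ulrich, i.e.\ $\mu_{\calR(I)}(C)=\e_\fkM^0(C)$, with $\fkM$ the graded maximal ideal of $\calR(I)$. The approach is to show that $\m\calR(I)$ annihilates $C$, so that $C$ becomes a module over the two-dimensional Cohen-Macaulay ring $\calR(I)/\m\calR(I)$; then the elements $a_1,a_2\in[\calR(I)]_1$ (the generators of $Q$, regarded in degree one) form a system of parameters for $C$ that simultaneously generates $\fkM C$, and the Ulrich condition follows by a Hilbert-function comparison. Establishing $\m\calR(I)\cdot C=0$ is the crucial use of the $p_g$-hypothesis: the cancellation $\overline{I\cdot J_n}=I\cdot J_n$ provided by the $p_g$-structure forces the needed equality $\m\cdot J_{n+1}=\m\cdot I\cdot J_n$ in each degree, whereas the mere integral closedness of $I$ would not suffice. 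This annihilator statement is the main technical obstacle and the reason the theorem is formulated for $p_g$-ideals rather than for arbitrary $\m$-primary integrally closed ideals.
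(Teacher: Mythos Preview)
Your setup is right through the identification $\rmK_{\calR(I)}(1)\cong J\calR$ with $J=Q\colon I$ (this is the paper's Lemma \ref{Key}), and choosing $\xi=h\in J=[\rmK_{\calR}(1)]_0$ is exactly what the paper does. The gap is in the Ulrich verification: the claim that $\m\calR(I)$ annihilates $C=J\calR/h\calR$ is false, and the equation $\m\cdot J_{n+1}=\m\cdot I\cdot J_n$ you cite is a triviality (since $J_{n+1}=I\cdot J_n$) that says nothing about annihilation. Already in the simplest case $A$ regular, $I=\m$, one has $J=\m$ and $[C]_0=\m/hA$; then $\m[C]_0=\m^2/h\m\neq 0$, so $\m C\neq 0$. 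Consequently your proposed parameters $a_1t,a_2t$ cannot generate $\fkM C$: they have degree $1$ and contribute nothing to $[\fkM C]_0$.

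What the paper actually does is produce a \emph{mixed} two-element system $(f,gt)$ with $f\in\m$ of degree $0$ and $g\in I$, and show $\fkM\cdot J\calR\subseteq (f,gt)J\calR+h\calR$, hence $\fkM C=(f,gt)C$. The existence of such $f,g,h$ comes from two nontrivial facts about $p_g$-ideals that your sketch does not invoke: first, $J=Q\colon I$ is again a $p_g$-ideal (Lemma \ref{colon_pg}); second, the joint-reduction property (Lemma \ref{jointred_pg}) applied to the pairs $(I,J)$ and $(\m,J)$ gives $IJ=gJ+hI$ and $\m J=fJ+h\m$. These two identities are precisely what drives the degree-by-degree inclusion $\fkM\cdot J\calR\subseteq (f,gt)J\calR+h\calR$. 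The $p_g$-hypothesis enters through these joint reductions and through $J$ being $p_g$, not through an annihilator statement.
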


Since any two-dimensional rational singularity is an almost Gorenstein local ring
having minimal multiplicity, the following theorem provides
a large class of local rings for which the Rees algebras of all powers of the maximal ideal are almost Gorenstein graded rings.

\begin{thm}[See Theorem \ref{Good-AGG}]
 \label{Main-good}
Let $(A,\m)$ be a two-dimensional almost Gorenstein local ring
having minimal multiplicity.
Then $\calR(\m^{\ell})$ is an almost Gorenstein graded ring for every $\ell \ge 1$.
\end{thm}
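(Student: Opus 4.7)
The plan is to establish the almost Gorenstein property of $\calR := \calR(\m^\ell)$ by exhibiting an explicit short exact sequence of graded modules
\[
0 \to \calR \to \K_\calR(-\rma(\calR)) \to D \to 0
\]
whose cokernel $D$ is an Ulrich graded $\calR$-module. Write $I = \m^\ell$. The minimal multiplicity hypothesis supplies a minimal reduction $Q = (a,b)$ of $\m$ with $\m^2 = Q\m$; a routine induction yields $\m^{n+1} = Q\m^n$ for all $n \ge 1$ and therefore $I^2 = Q^\ell I$. Thus $I$ has reduction number at most one, and by the Goto-Shimoda theorem $\calR$ is Cohen-Macaulay of dimension $3$ with $\rma(\calR) = -1$. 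The standard description of the graded canonical module in this setting gives $[\K_\calR]_n = \K_A \cdot I^{n-1}\,t^n$ for $n \ge 1$ and $0$ otherwise (using the convention $I^0 = A$), so $[\K_\calR(1)]_0 = \K_A \cdot t$.

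Next, I would invoke the almost Gorenstein structure of $A$: an exact sequence $0 \to A \xrightarrow{\iota} \K_A \to C \to 0$ with $C$ an Ulrich $A$-module. Since $\dim_A C = \dim A = 2$ (otherwise $\rme^0_\m(C) = 0$ would force $C = 0$), the chain $\mu_A(C) \le \ell_A(C/\m C) \le \ell_A(C/QC) = \rme^0_\m(C) = \mu_A(C)$ forces equalities throughout, yielding $\m C = QC$. The map $\iota$ extends by $\calR$-linearity to a graded injection $\Phi : \calR \hookrightarrow \K_\calR(1)$, defined in degree $n$ by $x\,t^n \mapsto \iota(1)\cdot x\,t^{n+1} \in \K_A \cdot I^n \cdot t^{n+1}$; injectivity degree-by-degree reduces to the injectivity of $\iota$ restricted to $I^n$.

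Finally, I would analyze the cokernel $D := \K_\calR(1)/\Phi(\calR)$, whose degree-$n$ piece is (up to the twist by $t^{n+1}$) isomorphic to $\K_A I^n / I^n$ for $n \ge 0$. A Tor-vanishing argument, using that $C$ is maximal Cohen-Macaulay and exploiting the minimal multiplicity relation together with $\m I^n = Q I^n$, identifies $D_n$ with the natural quotient of $C \otimes_A I^n$; then $\m C = QC$ combined with $\m I^n = Q I^n$ propagates the annihilator relations to $D$, forcing $D$ to be a Cohen-Macaulay graded $\calR$-module of dimension $2$ satisfying $\mu_\calR(D) = \rme^0_\calM(D) = \mu_A(C)$, which is precisely the Ulrich condition. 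The main technical obstacle will be this identification of $D$ and the Ulrich verification: one must control the possibly non-trivial torsion in $\K_A \otimes_A I^n \twoheadrightarrow \K_A \cdot I^n$ and reconcile the $A$-module Ulrich condition on $C$ with the graded $\calR$-module Ulrich condition on $D$. Both steps ultimately hinge on $\m^2 = Q\m$, confirming that minimal multiplicity is the essential engine of the argument rather than a mere simplifying assumption.
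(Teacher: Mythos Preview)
Your identification of the graded canonical module is incorrect, and this error propagates through the entire argument. For a good ideal $I$ in a two-dimensional Cohen--Macaulay ring (and $\m^\ell$ is good here once $A$ is non-regular), one has $\K_\calR(1) \cong I\K_A\,\calR$, so that $[\K_\calR(1)]_n = I^{n+1}\K_A$, not $I^n\K_A$; in particular $[\K_\calR(1)]_0 = \m^\ell\K_A$, not $\K_A$. Your map $\Phi$ is therefore not well defined: the element $\iota(1)$ is part of a minimal generating set of $\K_A$ and hence does not lie in $\m^\ell\K_A \subset \m\K_A$. A second error compounds this: in the defining sequence $0 \to A \to \K_A \to C \to 0$ for a two-dimensional non-Gorenstein almost Gorenstein ring, the Ulrich module $C$ is Cohen--Macaulay of dimension $d-1 = 1$, not $2$. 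The Ulrich condition then yields $\m C = yC$ for a \emph{single} element $y$, not $\m C = QC$ for the two-generated reduction $Q$.

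The paper's proof uses the same exact sequence $0 \to A \xrightarrow{\varphi} \K_A \to C \to 0$ and the element $x = \varphi(1)$, but sends $1 \in \calR$ to $h = xz^\ell \in \m^\ell\K_A = [\K_\calR(1)]_0$, where $y,z$ are chosen so that $(y,z)$ is a minimal reduction of $\m$ and $\m C = yC$. From $\m C = yC$ one extracts $\m\K_A = y\K_A + x\m$, and combining this with $\m^2 = (y,z)\m$ the paper verifies by an explicit calculation that
\[
\m^{\ell+1}\K_A = y\,\m^\ell\K_A + h\m,\qquad \m^{2\ell}\K_A = y^\ell\,\m^\ell\K_A + h\m^\ell,
\]
which gives $\fkM\cdot\K_\calR(1) \subset (y,\,y^\ell t)\K_\calR(1) + \calR h$ and hence the Ulrich condition on the cokernel. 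The abstract $\Tor$-vanishing you propose does not produce this; the argument genuinely requires tracking the specific generators $x,y,z$ through the powers of $\m$.
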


\begin{cor} \label{2-dimRatpower}
Let $(A,\m)$ be a two-dimensional rational singularity.
Then $\calR(\m^{\ell})$ is an almost Gorenstein graded ring for every $\ell \ge 1$.
\end{cor}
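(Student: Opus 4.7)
The plan is to deduce this corollary directly from Theorem \ref{Main-good}; all the real content is in that theorem, and what remains is to verify its two hypotheses for a two-dimensional rational singularity $(A,\m)$.

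First I would recall, as stated in the introduction and proved in \cite[Section 11]{GTT}, that any two-dimensional rational singularity is an almost Gorenstein local ring in the sense of Definition \ref{AGlocal-def}. This disposes of the first hypothesis.

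Second, I would verify that $A$ has minimal multiplicity, i.e.\ $\e_\m^0(A) = \mu_A(\m) - 1$. This is a classical consequence of Lipman's theory of two-dimensional rational singularities: every integrally closed $\m$-primary ideal $I$ satisfies $I^2 = QI$ for some (equivalently, every) minimal reduction $Q$ of $I$. Applying this to $I=\m$ gives $\m^2 = Q\m$ with $Q=(x,y)$ a parameter reduction. Because $A$ is Cohen-Macaulay of dimension $2$, one has $\e_\m^0(A) = \mathrm{length}_A(A/Q)$; since $\m^2 \subset Q \subset \m$ and generically $\mathrm{length}_A(Q/\m^2)=2$, a short length count gives $\mathrm{length}_A(A/Q) = \mathrm{length}_A(A/\m^2) - 2 = (1+\mu_A(\m)) - 2 = \mu_A(\m) - 1$, as required. (Equivalently, one may invoke that $\m$ is a $p_g$-ideal in this setting, and that $p_g$-ideals have reduction number at most one.)

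With both hypotheses in place, Theorem \ref{Main-good} yields that $\calR(\m^\ell)$ is an almost Gorenstein graded ring for every $\ell \ge 1$, which is exactly the claim. There is no serious obstacle here: the corollary is essentially a dictionary translation, and the only thing to be careful about is citing the correct classical facts (Lipman's result on reduction number one for $\m$-primary integrally closed ideals, and the identification of $\m^2 = Q\m$ with minimal multiplicity in dimension two).
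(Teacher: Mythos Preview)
Your approach is correct and matches the paper's. The paper does not give a separate proof of this corollary; it simply precedes the statement with the sentence ``Since any two-dimensional rational singularity is an almost Gorenstein local ring having minimal multiplicity,'' citing \cite[Section 11]{GTT} for the almost Gorenstein property, and then invokes Theorem \ref{Main-good}. Your length computation is fine but more than needed here: in the paper's usage (see the paragraph after Definition \ref{good-def}), the condition $\m^2=Q\m$ for a minimal reduction $Q$ \emph{is} the definition of minimal multiplicity, so once Lipman's reduction-number-one result gives $\m^2=Q\m$ you are done.
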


The following theorem is a higher-dimensional analog of \cite[Corollary 1.4]{GMTY2}.
We note that if $d=5$ and $\ell=2$, then
$\calR(\m^{2})_{\fkM}$ is an almost Gorenstein local ring
but $\calR(\m^2)$ is not an almost Gorenstein graded ring.

\begin{thm}[See Proposition \ref{AG-local} and
Theorem \ref{NotGraded}]
 \label{Main-RLRpower}
Let $(A,\m)$ be a regular local ring of
dimension $d \ge 2$ that possesses an infinite residue class field.
Then$:$
\begin{enumerate}
 \item[$(1)$] $\calR(\m^{\ell})$ is an almost Gorenstein graded ring if and only if
$\ell=1$, $d=2$ or $\ell=d-1;$
 \item[$(2)$] for $\ell \ge 2$ and $d \ge 3$,
$\calR(\m^{\ell})_{\fkM}$ is an almost Gorenstein local ring if and only if $\ell\mid  d-1$, where
$\fkM$ denotes the graded maximal ideal of $\calR(\m^{\ell})$.
\end{enumerate}
\end{thm}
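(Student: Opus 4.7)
The plan is to handle the six implications of the theorem in a coordinated way. Write $R = \calR(\m^\ell)$ with graded maximal ideal $\fkM = \m R + R_+$. Two cases of the positive direction of (1) reduce immediately to earlier material: $\ell = 1$ is \cite[Theorem 8.3]{GTT} (recalled in the introduction), and $d = 2$ with any $\ell$ follows from Theorem \ref{Main-good}, since a two-dimensional regular local ring is almost Gorenstein with minimal multiplicity. The genuinely new case for the positive direction is $\ell = d-1$ with $d \ge 3$.

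For that case my approach is three-step: first, determine the graded canonical module $\K_R$ using that the fibre cone $F_\m(\m^{d-1})$ is the $(d-1)$-st Veronese subring $k[x_1,\dots,x_d]^{(d-1)}$, whose $\rma$-invariant is $-\lceil d/(d-1)\rceil = -2$; this feeds into a standard Goto--Shimoda / Ikeda--Trung computation to give a concrete description of $\K_R$ whose graded pieces are built from appropriate powers of $\m$. Second, construct an explicit homogeneous embedding $\varphi\colon R \hookrightarrow \K_R(-\rma(R))$, the natural candidate being the one induced by multiplication by a distinguished canonical generator sitting in the lowest nonzero degree of $\K_R$. Third, verify that $C = \Coker\varphi$ is an Ulrich module: after killing a general system of parameters built from a minimal reduction $Q \subset \m^{d-1}$ of $R_+$ together with a regular system of parameters of $A$, $C$ should become a vector space concentrated in a single degree whose dimension equals its minimal number of generators, which is exactly the Ulrich condition.

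For (2) I would use the same reduction-modulo-parameters technique to reduce the almost Gorenstein property of $R_\fkM$ to a Hilbert-series equality for an Artinian quotient. Writing $d = q\ell + r$ with $0 \le r < \ell$, the canonical module of the Veronese fibre cone $k[x_1,\dots,x_d]^{(\ell)}$ is concentrated in degrees $\ge \lceil d/\ell\rceil$, and tracking the resulting Hilbert series shows that the expected Ulrich cokernel lifts to $R_\fkM$ iff $r = 1$, i.e.\ $\ell \mid d-1$. For the converse direction of (1), the \emph{graded} almost Gorenstein property further requires $C$ to be concentrated in a single $R$-degree, and the same calculation shows this can only happen when the quotient in $d - 1 = q\ell$ is $1$, i.e.\ $\ell = d - 1$. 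The main obstacle is Step 2 above: producing the map $\varphi$ in the case $\ell = d-1$ and proving, by a clean modulo-parameters reduction, that its cokernel is Ulrich. Once the canonical module and the right $\varphi$ are in hand, the numerical comparisons for (2) and for the converse of (1) should follow by bookkeeping organized around the Veronese fibre cone.
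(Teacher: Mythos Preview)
Your handling of the easy positive cases is fine, but you have missed the key simplification for $\ell = d-1$ with $d \ge 3$: here $\calR(\m^{d-1})$ is \emph{Gorenstein}, not merely almost Gorenstein. Indeed, once you compute (as you suggest) that the canonical module has the form $\K_{\calR} \cong \sum_{i=1}^{b+1}\calR t^i$ with $b = (d-1)/\ell - 1$, the case $\ell = d-1$ gives $b=0$ and $\K_{\calR} \cong \calR(-1)$. So your elaborate three-step Ulrich construction is unnecessary; the paper dispatches this case in one line by citing the Gorenstein criterion (see the remark opening Section~4).

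More seriously, your plan for the converse of (1) rests on a false premise. You write that ``the graded almost Gorenstein property further requires $C$ to be concentrated in a single $R$-degree.'' It does not: the cokernel of a degree-zero map $\calR \to \K_{\calR}(1)$ can be Ulrich without being concentrated in one degree (this already happens in the $p_g$-ideal case, Theorem~\ref{Sect2-thm}). The paper's argument is entirely different and numerical: assuming a graded Ulrich exact sequence with $\varphi(1)=t$, one has $\mu_{\calR}(C) \le b$ on one hand, while $C \supseteq \calR/I^b\calR(-(b+1))$ forces $\rme^0_{\fkM}(C) \ge \rme^0_{\fkM}(G(I)) = \ell^d$; for $\ell \ge 2$ this exceeds $b$, contradicting the Ulrich equality $\mu_{\calR}(C) = \rme^0_{\fkM}(C)$ unless $b=0$.

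Your approach to (2) via Hilbert series of the Veronese fibre cone is also rather different from the paper's and, as stated, too vague to be a proof. The paper argues directly: for the ``if'' direction it invokes the general lemma that whenever $A$ is regular and $\K_{\calR} = \sum_{i=1}^{c+1}\calR t^i$, the localization $\calR_{\fkM}$ is almost Gorenstein local (Lemma~\ref{useful}, proved by observing that $C = \K_{\calR}/\calR t^{c+1} \cong A^{\oplus c}$ has $\rme^0_{\fkM}(C) = c = \mu_{\calR}(C)$). For the ``only if'' direction it computes $\mu_{\calR}(\K_{\calR})$ and $\mu_{\calR}(\fkM\K_{\calR})$ explicitly, feeds them into the inequality $\mu_{\calR}(\fkM\K_{\calR}) \le \mu_{\calR}(\fkM) + d\cdot(\mu_{\calR}(\K_{\calR})-1)$ forced by the Ulrich condition, and reduces to a binomial-coefficient inequality (Lemma~\ref{Ineq}) whose equality case is exactly $\ell \mid d-1$. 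Your reduction-modulo-parameters outline may be salvageable, but you would still need to identify the precise numerical obstruction, and the paper's generator-counting route is both cleaner and more transparent.
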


Note that under the above assumption, the associated
graded ring $G(\m^{\ell})$ is Gorenstein if and only if
$\ell\mid  d-1$; see \cite[Theorem 2.4]{Hy}.

We now briefly explain how this paper is organized.
The proof of Theorem \ref{Main-PGRees} is given in
 Section 2.
In Section 3, we prove Theorem \ref{Main-good}.
In Section 4, we prove Theorem \ref{Main-RLRpower}.

In what follows, unless otherwise specified, let $(A,\m)$ be a Cohen-Macaulay local ring.  Let $\mathrm{K}_A$ denote the canonical module of $A$.
For each finitely generated $A$-module $M$, let $\mu_R(M)$ (respectively $\e_\m^0(M)$)
denote the number of elements in a minimal system of generators for $M$
(respectively the multiplicity of $M$ with respect to $\m$).

\section{Rees algebras of $p_g$-ideals
(Proof of Theorem $\ref{Main-PGRees}$)}

The purpose of this section is to prove
Theorem \ref{Main-PGRees}.
Throughout this section, let $(A,\m)$ be a Cohen-Macaulay local ring of dimension two,
and suppose that $A$ is generically a Gorenstein ring and that
it possesses a canonical ideal $\K_A \subset A$.
Moreover, let $I \subset A$ be an $\m$-primary ideal, and let $Q$ be its minimal reduction of $I$.
Suppose that $I$ is stable, that is, $I^2=QI$ and $I \ne Q$, and
set $J=Q:I$.
Then the Rees algebra
\[
\calR= \calR(I) = A[It] \subseteq A[t]
\]
of $I$ is a Cohen-Macaulay ring \cite{GS} with $\rma(\calR) = -1$, where $t$ denotes an indeterminate over $A$. We denote by $\fkM = \m \calR + \calR_+$ the graded maximal ideal of $\calR$.

\subsection{$P_g$-ideals}
Assume that $A$ is an excellent normal local domain over an algebraically closed field and
there exists a resolution of singularities
$f \colon X \to \Spec A$ with $E=f^{-1}(\m)=\bigcup_{i=1}^r E_i$.
Then $p_g(A)=\ell_A(H^1(\mathcal{O}_X))$ is called the \textit{geometric genus} of $A$,
which is independent of the choice of the resolution of singularities.
Recall that any $\m$-primary integrally closed ideal $I$ can be written
as $I= H^0(\mathcal{O}_X(-Z))$ for some resolution of singularities
$X \to \Spec A$ and some anti-nef cycle $Z$ on $X$
so that $I\mathcal{O}_X=\mathcal{O}_X(-Z)$; see e.g.\ \cite[Section 18]{Li}.
Then $I$ is said to be \textit{represented on $X$ by $Z$}.

Now let us recall the notion of $p_g$-ideals.
Fix a resolution of singularities $X \to \Spec A$.
Let $Z$ be an anti-nef cycle on $X$, and assume that $\mathcal{O}_X(-Z)$ has no fixed component,
that is, $H^0(\mathcal{O}_X(-Z)) \ne H^0(\mathcal{O}_X(-Z-E_i))$ for every $E_i \subset E$.
Then we have
\[
\ell_A(H^1(\mathcal{O}_X(-Z))\le p_g(A).
\]
If equality holds true, then $\mathcal{O}_X(-Z)$ is generated by global sections
(see \cite[Theorem 3.1]{OWY1}) and $Z$ is
called a \textit{$p_g$-cycle}.

\begin{defn}[\textrm{Okuma et al.\ \cite[Definition 3.2]{OWY1}}]
\label{defn-pgideal}
An $\m$-primary ideal $I \subset A$ is called a
\textit{$p_g$-ideal}
if it is represented  by a $p_g$-cycle $Z$ on some resolution of singularities $X \to \Spec A$.
\end{defn}

The following criterion for $p_g$-ideals is very useful.

\begin{lem}[\textrm{Okuma et al.\ \cite[Theorem 1.1]{OWY2}}]
\label{Chara-pg}
Let $I \subset A$ be an $\m$-primary ideal that is not a
parameter ideal.
Then the following conditions are equivalent:
\begin{enumerate}
\item[$(1)$] $I$ is a $p_g$-ideal;
\item[$(2)$]
$I^2=QI$ for some parameter ideal $Q \subset I$
and $I^n$ is integrally closed for every $n \ge 1$;
\item[$(3)$] $\mathcal{R}(I)$ is a Cohen-Macaulay normal domain.
\end{enumerate}
\end{lem}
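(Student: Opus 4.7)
The plan is to establish the cycle of implications $(1) \Rightarrow (3) \Rightarrow (2) \Rightarrow (1)$. The easiest is $(3) \Rightarrow (2)$: normality of $\calR(I)$ immediately gives that every power $I^n$ is integrally closed in $A$. For the reduction identity $I^2 = QI$, I would invoke the Goto--Shimoda criterion, which in dimension two says that $\calR(I)$ being Cohen--Macaulay forces the reduction number $r_Q(I) \le 1$ for any minimal reduction $Q$; combined with the hypothesis that $I$ is not a parameter ideal, so $I \ne Q$, this reads as $I^2 = QI$.

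For $(1) \Rightarrow (3)$, I fix a resolution $f \colon X \to \Spec A$ and an anti-nef $p_g$-cycle $Z$ on $X$ representing $I$. The main intermediate step is to show that $nZ$ is again a $p_g$-cycle for every $n \ge 1$. I would argue by induction using the short exact sequence
\[
0 \to \calO_X(-(n+1)Z) \to \calO_X(-nZ) \to \calO_Z(-nZ) \to 0,
\]
together with the vanishing $H^1(\calO_Z(-nZ)) = 0$, which follows from the global generation of $\calO_X(-Z)$ forcing $\calO_Z(-nZ)$ to have nonnegative degree on each exceptional component $E_i$. This yields $I^n = H^0(\calO_X(-nZ))$ integrally closed for every $n$, so $\calR(I)$ is a normal domain. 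To get Cohen--Macaulay-ness, I would appeal to a Sancho de Salas-type cohomological criterion: in dimension two, $\calR(I)$ is Cohen--Macaulay if and only if the natural maps $H^1(\calO_X(-nZ)) \to H^1(\calO_X(-(n-1)Z))$ are injective for every $n \ge 1$. Since the $p_g$-cycle property propagates to $nZ$ and gives the constancy $h^1(\calO_X(-nZ)) = p_g(A)$, these maps are automatically isomorphisms.

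For $(2) \Rightarrow (1)$, the integrally closed powers of $I$ give normality of $\calR(I)$, and $I^2 = QI$ together with Goto--Shimoda yields Cohen--Macaulay-ness, so $\calR(I)$ is a Cohen--Macaulay normal domain. The remaining task is to extract the geometric $p_g$-condition. Representing $I$ as $H^0(\calO_X(-Z))$ for a suitable resolution and anti-nef cycle $Z$ without fixed component, I would combine the forced constancy $h^1(\calO_X(-nZ)) = h^1(\calO_X) = p_g(A)$ (read off from Cohen--Macaulay-ness via the Sancho de Salas criterion above) with the universal upper bound $h^1(\calO_X(-Z)) \le p_g(A)$, and conclude that equality holds, i.e., $Z$ is a $p_g$-cycle.

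The main obstacle throughout is the Sancho de Salas translation between the local cohomology $H^i_{\fkM}(\calR(I))$ and the sheaf cohomology $H^{i-1}(X, \calO_X(-nZ))$ on the resolution; controlling the connecting maps precisely enough to obtain the if-and-only-if Cohen--Macaulay criterion used in both $(1) \Rightarrow (3)$ and $(2) \Rightarrow (1)$ is the technical core of the argument and is the only step that genuinely uses the two-dimensional normal geometry over an algebraically closed field rather than purely ideal-theoretic data.
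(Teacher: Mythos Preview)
The paper does not prove this lemma at all: it is quoted verbatim from \cite[Theorem~1.1]{OWY2} and used as a black box, so there is no argument in the present paper to compare your proposal against. Your cycle $(1)\Rightarrow(3)\Rightarrow(2)\Rightarrow(1)$ and the use of the Goto--Shimoda criterion together with a Sancho de Salas/Lipman cohomological translation is indeed the standard route and is essentially what the cited reference does.

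One technical caveat in your sketch of $(1)\Rightarrow(3)$: the claimed vanishing $H^1(\calO_Z(-nZ))=0$ does not follow just from $\calO_Z(-nZ)$ having nonnegative degree on each $E_i$, since the exceptional components need not be rational when $A$ is not a rational singularity. The actual argument in \cite{OWY1,OWY2} that powers of $p_g$-ideals remain $p_g$-ideals proceeds instead via the product theorem (the product of two $p_g$-ideals is again a $p_g$-ideal), which uses a more delicate Riemann--Roch/Kato-type computation rather than a bare degree bound. Apart from that step, your outline is sound.
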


Now let us recall the notion of rational singularities
(of dimension two).
The ring $A$
is called a  \textit{rational singularity} if $p_g(A)=0$.
For instance, toric singularities and quotient singularities are
typical examples of rational singularities.
The notion of $p_g$-ideals can be regarded as analogous to the notion of integrally closed ideals in a rational singularity.
In fact, Lipman \cite[Theorem 12.1]{Li} showed that
any $\m$-primary integrally closed ideal of $A$
is a $p_g$-ideal provided that $A$ is a rational singularity.

It is known that the Rees algebra $\mathcal{R}(I)$ of an
integrally closed ideal $I$ in a rational singularity
possesses some good property (e.g.\ rationality) by Lipman \cite{Li}.
Moreover, Lemma \ref{Chara-pg} implies that
$\mathcal{R}(I)$ is a Cohen-Macaulay normal domain if $I$ is a $p_g$-ideal of $A$.
We pose the following conjecture.

\begin{conj} \label{pg-conj}
If $I \subset A$ is a $p_g$-ideal, then
$\mathcal{R}(I)$ an almost Gorenstein graded ring.
\end{conj}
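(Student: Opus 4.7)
The plan is to mimic the structure of the proof of Theorem \ref{Main-PGRees}, replacing the canonical ideal $\K_A=A$ of the Gorenstein case by the honest canonical ideal $\K_A\subset A$ of a general two-dimensional excellent normal local domain over an algebraically closed field, and to propagate the $p_g$-condition through every homological computation. Concretely, I would try to exhibit a short exact sequence $0\to \calR\to \K_{\calR}(1)\to C\to 0$ of graded $\calR$-modules in which $C$ is Ulrich, and deduce the conjecture from Definition \ref{AGgraded-def}.

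First I would seek an explicit description of the graded canonical module $\K_{\calR}$. By Lemma \ref{Chara-pg}, the Rees algebra $\calR=\calR(I)$ is a Cohen-Macaulay normal domain, and as in Section~2 it satisfies $\rma(\calR)=-1$. Since $A$ is a generically Gorenstein Cohen-Macaulay normal local ring it admits a canonical ideal $\K_A\subset A$, and one expects a presentation of the form
\[
\K_{\calR}(1) \;\cong\; \bigoplus_{n\ge 0} L_n\, t^n
\]
as a graded submodule of $A[t]$, where $L_n$ is a fractional ideal determined by $\K_A$ and $I$. In the Gorenstein case $\K_A\cong A$ treated by Theorem \ref{Main-PGRees} this degenerates to $L_n=JI^{n-1}$ with $J=Q:I$; in general the analogue of $J$ should be the colon module $J_{\K}:=Q\K_A:_{\K_A}I$, and one would verify that $L_n=J_{\K}\cdot I^n$ for $n\ge 1$ using the stability $I^2=QI$, the Sally-module exact sequence for $\calR$, and the integral-closedness of every $I^n$ granted by Lemma \ref{Chara-pg}.

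With $\K_{\calR}$ in hand I would construct the injection $\calR\hookrightarrow \K_{\calR}(1)$ by selecting an element $\alpha\in J_{\K}$ that is a non-zero-divisor on the associated graded ring $G(I)$, and sending $1\mapsto \alpha t^0$; the cokernel $C$ then has $n$-th graded piece isomorphic to $L_n/\alpha I^n$. To show that $C$ is an Ulrich $\calR$-module it suffices to verify $\fkM C=\calR_{+}C$ together with $\mu_{\calR}(C)=\e_{\fkM}^0(C)$, both of which can be attacked by filtering $C$ by powers of $\calR_{+}$ and reducing them to rank computations of the finitely generated $A$-module $\K_A/\alpha A$ over the two-dimensional Cohen-Macaulay ring $A$.

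The central obstacle is the final Ulrich verification. In the Gorenstein case $\K_A=A$ identifies $C$ with a quotient of $G(I)$ by an $\calR_{+}$-regular element, which together with the Cohen-Macaulayness of $\calR$ closes the argument; without this identification the module structure on $\K_A$ is genuinely non-trivial, and the chosen $\alpha$ need no longer behave like a parameter on $C$. I would attempt two complementary reductions. First, pass to the anticanonical cover $\widetilde{A}=\bigoplus_{n\ge 0}\K_A^{(-n)}$ under the hypothesis that it is a finite quasi-Gorenstein $A$-algebra, lift $I$ to an ideal $\widetilde I\subset \widetilde A$, apply Theorem \ref{Main-PGRees} to $\calR(\widetilde I)$, and then descend almost-Gorensteinness along the finite ring extension $\calR\to \calR(\widetilde I)$. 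Second, work directly on a resolution $f\colon X\to \Spec A$ that represents $I$ by a $p_g$-cycle $Z$, replacing $J_{\K}$ by an intersection-theoretic quantity on $X$ by way of the Okuma--Watanabe--Yoshida characterization of $p_g$-ideals. The delicate point common to both routes is that the $p_g$-property of $I$ must be preserved (or correctly reinterpreted) under the reduction, which is plausible because the cohomological equality $\ell_A(\rmH^1(\mathcal{O}_X(-Z)))=p_g(A)$ is well-behaved under covers that are \'etale in codimension one.
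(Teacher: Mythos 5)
The statement you are addressing is stated in the paper as Conjecture \ref{pg-conj} and is left \emph{open} there; the paper only establishes the Gorenstein case (Theorem \ref{Sect2-thm}). Your proposal correctly reproduces the skeleton of that proof---the identification $\K_{\calR}(1)\cong (Q\K_A:_{\K_A}I)\calR$ from Lemma \ref{Key}, the map $1\mapsto\alpha$, and the reduction to showing that the cokernel $C$ is Ulrich---but the step you yourself flag as ``the central obstacle'' is precisely the entire content of the problem, and neither of your two proposed reductions closes it. In the Gorenstein case the paper succeeds because $J=Q:I$ is again an \emph{ideal} of $A$, Lemma \ref{colon_pg} guarantees that $J$ is a $p_g$-ideal, and Lemma \ref{jointred_pg} then produces the joint reductions $IJ=gJ+Ih$ and $\m J=fJ+\m h$ that force $\fkM C=(f,gt)C$. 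When $A$ is not Gorenstein, $Q\K_A:_{\K_A}I$ is a colon submodule of the canonical ideal; there is no result in the paper (or cited by it) asserting that it is integrally closed, let alone a $p_g$-ideal, so the joint-reduction machinery simply does not apply, and your proposal supplies no substitute for it.

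The two fallback routes are not proofs either. The anticanonical cover $\widetilde A$ is finite and well behaved only under extra hypotheses you would have to impose (so at best you would obtain a special case); the lifted ideal need not be a $p_g$-ideal of $\widetilde A$; and almost Gorensteinness is not known to descend along a finite extension $\calR\to\calR(\widetilde I)$---the definition requires exhibiting an exact sequence ending in an Ulrich module, and such sequences do not push down functorially. The resolution-theoretic route replaces the colon module by ``an intersection-theoretic quantity'' without specifying it or explaining how it yields the two equalities needed for $\fkM C=(f,gt)C$. As written, the proposal is a research program that terminates exactly at the known difficulty; the conjecture remains open.
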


As a partial answer, we prove the following theorem, which implies that Theorem \ref{Main-PGRees} holds.

\begin{thm} \label{Sect2-thm}
Assume that $A$ is Gorenstein under the assumption in this section.
Let $I \subset A$ be a $p_g$-ideal and $Q$ its minimal reduction.
Put $J=Q\colon I$ and $\calR=\calR(I)$.
Then there exists a short exact sequence and elements $f \in \m$, $g \in I$, and $h \in J$ such that
\[
 0 \to \calR \stackrel{\varphi}{\longrightarrow} \K_{\calR}(1)\cong J\calR \to C \to 0,
\]
where $\varphi(1) = h$ and $\fkM C = (f,gt)C$.
In particular, $\calR$ is an almost Gorenstein graded ring.
\end{thm}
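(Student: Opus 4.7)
The plan is to construct an Ulrich $\calR$-module as the cokernel of $\varphi$. First, I would identify the shifted canonical module: since $A$ is Gorenstein of dimension two and $\calR$ is Cohen-Macaulay by Lemma \ref{Chara-pg}, the Goto-Shimoda / Herzog-Simis-Vasconcelos description of the graded canonical module of a Cohen-Macaulay Rees algebra over a Gorenstein ring gives $\K_\calR = \bigoplus_{n \ge 1} JI^{n-1}t^n$, and combined with $\rma(\calR) = -1$ this yields the graded isomorphism $\K_\calR(1) \cong J\calR$, where $J\calR := \bigoplus_{n \ge 0} JI^n t^n$. For any nonzero $h \in J$, the $\calR$-linear map $\varphi \colon \calR \to J\calR$ sending $1$ to $h$ is injective since $\calR$ is a domain, and its cokernel $C$ has graded pieces $C_0 = J/hA$ and $C_n = JI^n/hI^n$ for $n \ge 1$.

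The identity $I \cdot JI^{n-1} = JI^n$ (immediate from associativity of ideal product) gives $I \cdot C_{n-1} = C_n$ for every $n \ge 1$, so $\fkM C = C_{\ge 1} + \m C_0$. By Nakayama's lemma, the desired equality $\fkM C = (f, gt)C$ then reduces to the two conditions
\[
\m J = (f, h)J, \qquad gJI^{n-1} + hI^n = JI^n \text{ for all } n \ge 1.
\]
The first is arranged by choosing $(f, h)$ to be a minimal reduction of $\m$ on the $\m$-primary $A$-module $J$, which exists because the residue field $K \cong A/\m$ is (algebraically closed, hence) infinite. The second, requiring a single $g \in I$ to act surjectively $C_{n-1} \twoheadrightarrow C_n$ uniformly in $n$, is the main technical obstacle, and it is here that the $p_g$-hypothesis enters in an essential way. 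My strategy is to pass to a resolution $X \to \Spec A$ on which $I$ is represented by the $p_g$-cycle $Z$: by the defining property of a $p_g$-ideal, $\mathcal{O}_X(-nZ)$ is generated by global sections for all $n \ge 1$ (cf.\ \cite{OWY1, OWY2}), and a sufficiently generic global section $g \in I = H^0(\mathcal{O}_X(-Z))$, together with a generic $h \in J$, yields the required uniform surjectivity from the corresponding sheaf-theoretic equalities on $X$.

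With both conditions in hand, Cohen-Macaulayness of $C$ follows from the depth lemma applied to $0 \to \calR \to J\calR \to C \to 0$: both outer terms are Cohen-Macaulay of dimension three, so $\depth_\calR C \ge 2$. On the other hand, $\varphi$ becomes an isomorphism after localising at the nonzerodivisor $h$, so $\Supp C \subset V(h\calR)$ and hence $\dim C \le 2$. Thus $C$ is Cohen-Macaulay of dimension two, the graded analysis gives $\ell_\calR(C/(f, gt)C) < \infty$, confirming that $(f, gt)$ is a system of parameters on $C$, and the identity $\fkM C = (f, gt)C$ forces $\mu_\calR(C) = \e_\fkM^0(C)$. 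Hence $C$ is an Ulrich $\calR$-module and $\calR$ is almost Gorenstein graded.
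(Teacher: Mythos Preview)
Your overall architecture---identify $\K_{\calR}(1)\cong J\calR$, map $1\mapsto h$, and show $\fkM C=(f,gt)C$---matches the paper, and your endgame (injectivity from $\calR$ being a domain, Ulrichness from $\fkM C=(f,gt)C$) is fine. The gap is in how you produce $f,g,h$.

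The paper's proof rests on two algebraic facts you do not invoke. First, Lemma~\ref{colon_pg}: the colon ideal $J=Q\!:\!I$ is itself a $p_g$-ideal. Second, Lemma~\ref{jointred_pg}: for a $p_g$-ideal and an integrally closed $\m$-primary ideal one has a joint reduction with \emph{reduction number zero}. Applying this twice (to the pairs $(I,J)$ and $(J,\m)$, with a common generic $h\in J$) yields
\[
IJ=gJ+hI,\qquad \m J=fJ+h\m,
\]
and multiplying the first identity by $I^{n-1}$ immediately gives $JI^{n}=gJI^{n-1}+hI^{n}$ for all $n\ge 1$. That is the entire content of the ``main technical obstacle''; no passage to a resolution is needed once $J$ is known to be a $p_g$-ideal.

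By contrast, your justification for the degree-zero identity is not valid as stated. Choosing $(f,h)$ to be ``a minimal reduction of $\m$ on the module $J$'' gives only $\m^{n+1}J=(f,h)\m^{n}J$ for $n\gg 0$, not $\m J=(f,h)J$; the latter is precisely a joint-reduction-with-reduction-number-zero statement, and it fails for general $\m$-primary $J$. It holds here exactly because $J$ is a $p_g$-ideal, which you never establish. Likewise, your sheaf-theoretic sketch for the second identity is vague about why a \emph{single} generic $g$ works uniformly in $n$ and why the same $h$ can serve both identities; the joint reduction lemmas handle this cleanly. So the missing idea is Lemma~\ref{colon_pg} plus Lemma~\ref{jointred_pg}; once you insert those, your outline becomes the paper's proof.
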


\begin{cor} \label{cor-PGRees}
Assume that $A$ is a rational double point, that is,
it is a Gorenstein rational singularity that is not regular.
Then $\calR(I)$ is an almost Gorenstein normal
graded ring for any $\m$-primary integrally closed ideal $I \subset A$.
\end{cor}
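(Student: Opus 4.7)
The plan is to realize $\K_{\calR}$ concretely as (a shift of) the ideal $J\calR$, to construct $\varphi$ as multiplication by a well-chosen element $h\in J$, and then to verify that the cokernel $C=J\calR/h\calR$ is an Ulrich module by a graded-pieces analysis.

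First, I would establish the identification $\K_{\calR}(1)\cong J\calR$. Since $A$ is a $2$-dimensional Cohen-Macaulay Gorenstein ring, $I$ is stable with $I^{2}=QI$ (Lemma~\ref{Chara-pg}), and $\calR$ is Cohen-Macaulay normal with $\rma(\calR)=-1$, the standard description of the graded canonical module of a Cohen-Macaulay Rees algebra (via Ikeda, Herzog-Simis-Vasconcelos, or Goto-Shimoda), combined with $\K_{A}=A$, gives $[\K_{\calR}]_{n}=JI^{n-1}$ for all $n\ge 1$ and zero otherwise. Hence $\K_{\calR}\cong J\calR(-1)$, so $\K_{\calR}(1)\cong J\calR$ as graded $\calR$-modules, with degree-$0$ piece equal to $J$.

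Next, I would take $g$ to be one of the two generators of a minimal reduction $Q=(g,b)$ of $I$, define $\varphi:\calR\to J\calR$ as multiplication by an element $h\in J$ to be specified, and study $C=J\calR/h\calR$. Since $\calR$ is a domain with $h\neq 0$, $\varphi$ is injective; and since $J\calR\cong \K_{\calR}(1)$ is maximal Cohen-Macaulay of dimension $3$, killing the regular element $h$ yields a Cohen-Macaulay $\calR$-module $C$ of dimension $2$. A direct computation shows
\[
[\fkM C]_{n}=JI^{n}/hI^{n},\qquad [(f,gt)C]_{n}=(fJI^{n}+gJI^{n-1}+hI^{n})/hI^{n}
\]
for $n\ge 1$, while $[\fkM C]_{0}=(\m J+hA)/hA$ and $[(f,gt)C]_{0}=(fJ+hA)/hA$. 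By Nakayama's lemma and induction on $n$, the condition $\fkM C=(f,gt)C$ therefore reduces to the two identities
\[
\text{(A)}\quad JI=gJ+hI,\qquad \text{(B)}\quad \m J\subseteq fJ+hA.
\]
Once (A) and (B) hold, $\{f,gt\}$ forms a system of parameters on $C$ modulo $\Ann_{\calR}(C)$ and a minimal reduction of $\fkM$ on $C$. Then $\mu_{\calR}(C)=\ell_{\calR}(C/\fkM C)=\ell_{\calR}(C/(f,gt)C)=\e^{0}_{(f,gt)}(C)=\e^{0}_{\fkM}(C)$, so $C$ is Ulrich and $\calR$ is almost Gorenstein graded.

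The main obstacle is the verification of (A) and (B), and it is here that the $p_{g}$-hypothesis is essential. Using the geometric model $I=H^{0}(\calO_{X}(-Z))$ on a resolution $X\to\Spec A$ with $Z$ a $p_{g}$-cycle, together with the normality of $\calR$, one has strong control on the ideals $J$, $JI$, and their products (e.g., $JI$ is integrally closed, and cohomology vanishing on $X$ yields surjectivity statements between global sections of natural line bundles). A sufficiently general choice of the reduction generators and of $h\in J\setminus\m J$, relying on the algebraically closed field hypothesis for generic lifts of generators of $J/Q$, should produce (A); and once (A) is in hand, one shows that $J/hA$ is cyclic as an $A$-module, from which $\m J\subseteq fJ+hA$ follows for an appropriate $f\in\m$, yielding (B). Working out this generic-choice argument in the $p_{g}$-setting is the technical heart of the proof.
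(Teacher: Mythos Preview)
Your overall architecture matches the paper's proof of Theorem~\ref{Sect2-thm}: identify $\K_{\calR}(1)\cong J\calR$ with $J=Q\colon I$, take $\varphi$ to be multiplication by some $h\in J$, and reduce the Ulrich property of $C$ to the two identities
\[
\text{(A)}\quad IJ=gJ+hI,\qquad \text{(B)}\quad \m J=fJ+\m h.
\]
However, your proposed verification of (A) and (B) has a genuine gap. For (B) you write that ``one shows that $J/hA$ is cyclic as an $A$-module,'' but this would force $\mu_A(J)\le 2$, i.e.\ $J$ a complete intersection, which is false for a general $p_g$-ideal $J$ in a rational double point. Your approach to (A) is also only a sketch (``a sufficiently general choice \ldots\ should produce (A)'') and does not identify the mechanism.

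The paper obtains (A) and (B) cleanly from two facts you do not invoke: first, $J=Q\colon I$ is again a $p_g$-ideal (Lemma~\ref{colon_pg}); second, the joint-reduction lemma (Lemma~\ref{jointred_pg}) says that for a $p_g$-ideal $L$ and an integrally closed ideal $L'$ there exist $a\in L$, $b\in L'$ with $LL'=aL'+bL$. Applying this to the pair $(I,J)$ gives (A), and applying it to the pair $(J,\m)$ gives $\m J=fJ+\m h$, which is precisely (B). No cyclicity of $J/hA$ is needed.

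Finally, note that the paper's proof of the Corollary itself is a one-line deduction rather than a reworking of Theorem~\ref{Sect2-thm}: since $A$ is a rational singularity, every $\m$-primary integrally closed ideal is a $p_g$-ideal (Lipman), and since $A$ is not regular such an ideal is never a parameter ideal; then Theorem~\ref{Sect2-thm} applies directly. Your proposal bypasses this reduction and instead attempts to reprove the theorem, which is acceptable in principle but leaves you with the unresolved step above.
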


\begin{proof}
Under this assumption, any $\m$-primary
integrally closed ideal is a $p_g$-ideal but not
a parameter ideal. Thus, we can apply Theorem~\ref{Sect2-thm}.
\end{proof}

Since any regular local ring is a rational singularity,
we have the following.

\begin{ex} \label{ex-regular}
Let $A$ be a regular local ring with $\dim A=2$.
Then $\mathcal{R}(I)$ is an almost Gorenstein graded ring
for any integrally closed ideal
$I \subset A$.
In particular, $\calR(\m^{\ell})$ is an almost
Gorenstein graded ring for every $\ell \ge 1$.
\end{ex}

\begin{rem}
Suppose that $A$ is a Gorenstein local ring of dimension two and $I=(a,b)$ is a parameter ideal of $A$.
Then $\calR(I) \cong A[X,Y]/(aY-bX)$ is a Gorenstein ring.
\end{rem}

Okuma et al.\ \cite[Theorem 1.2]{OWY1} showed that any excellent normal
local domain of dimension two admits a $p_g$-ideal.
Therefore, the following corollary is obtained from Theorem \ref{Main-PGRees}.

\begin{cor} \label{ex-AGRees-existence}
For any excellent normal Gorenstein local domain of dimension two
over an algebraically closed field $k$, there exists an $\m$-primary
ideal $I$ so that $\mathcal{R}(I)$ is an almost Gorenstein graded ring.
\end{cor}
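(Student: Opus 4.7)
The plan is to combine two ingredients already in play: the existence theorem of Okuma--Watanabe--Yau and the main theorem of this section (Theorem~\ref{Main-PGRees}, equivalently Theorem~\ref{Sect2-thm}). Since $A$ is an excellent normal local domain of dimension two over an algebraically closed field, the cited result of Okuma et al.\ (\cite[Theorem 1.2]{OWY1}) guarantees the existence of at least one $\m$-primary $p_g$-ideal $I \subset A$, with no Gorensteinness assumption required.

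Now invoke Theorem~\ref{Main-PGRees}: under our hypothesis, $A$ is additionally Gorenstein, so all the assumptions of that theorem are satisfied by the ideal $I$ produced in the previous step. The conclusion is immediate: $\mathcal{R}(I)$ is an almost Gorenstein graded ring.

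There is no real obstacle, since this corollary is simply the conjunction of the two previous results; the only minor verification is that the hypotheses of \cite[Theorem 1.2]{OWY1} and of Theorem~\ref{Main-PGRees} are both met by $A$, which is clear. The proof thus consists of one sentence citing the existence theorem and one sentence applying Theorem~\ref{Main-PGRees}.
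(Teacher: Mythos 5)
Your proposal is correct and is essentially identical to the paper's own argument: the paper likewise combines the existence of a $p_g$-ideal from \cite[Theorem 1.2]{OWY1} with Theorem~\ref{Main-PGRees} (Theorem~\ref{Sect2-thm}) to conclude. No further comment is needed.
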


\begin{ex}[\textrm{Okuma et al.\ \cite{OWY2}}]
Let $p \ge 1$ be an integer.
\begin{enumerate}
\item Let $A=k[[x,y,z]]/(x^2+y^3+z^{6p+1})$.
Then $I_k=(x,y,z^k)$ is a $p_g$-ideal
for every $k=1,2,\ldots, 3p$.
\item  Let $A=k[[x,y,z]]/(x^2+y^4+z^{4p+1})$.
Then $I_k=(x,y,z^k)$ is a $p_g$-ideal
for every
$k=2,\ldots,2p$. However, $I_1=\m$ is not.
\end{enumerate}
When this is the case, $p_g(A)=p$.
\end{ex}
\subsection{Proof of Theorem $\ref{Main-PGRees}$}
The purpose of this subsection is to prove Theorem \ref{Main-PGRees}.
In what follows, we always assume that the assumption of Theorem \ref{Main-PGRees} holds.
The following lemma, which is a special case of \cite[Lemma 5.1]{GI},
 plays a key role in the proof. Note that if $A$ is Gorenstein, then it is a special case of \cite[Theorem 2.7(a)]{U}.

\begin{lem} \label{Key}
Suppose that $A$ possesses the canonical ideal $\K =\K_A$. Then $\K_{\mathcal{R}}(1) \cong (Q\K \colon_{\K} I)\mathcal{R}$ as graded $\mathcal{R}$-modules.
\end{lem}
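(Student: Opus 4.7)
The plan is to extend Ulrich's argument \cite[Theorem 2.7(a)]{U} from the Gorenstein case to the present generically Gorenstein setting by systematically replacing the role of $A$ with the canonical ideal $\K$. The core idea is to compute $\K_{\calR}$ by viewing $\calR$ as a maximal Cohen--Macaulay module over the smaller Rees algebra $\calR(Q) = A[at, bt]$ of a minimal reduction $Q = (a, b)$ of $I$.

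First I would note that $\calR(Q) \cong A[X,Y]/(bX - aY)$ is a Cohen--Macaulay hypersurface of dimension three over $A$, with graded canonical module $\K_{\calR(Q)} \cong \K\,\calR(Q)(-1)$ by the standard adjunction for hypersurfaces (the $(-1)$-shift coming from the degree of the defining relation $bX-aY$). Since $I^2 = QI$ forces $I^n = Q^{n-1}I$ for $n \ge 1$, the ring $\calR$ is finite over $\calR(Q)$; being of dimension three and Cohen--Macaulay, it is maximal Cohen--Macaulay over $\calR(Q)$, and the adjunction formula for canonical modules gives
\[
\K_{\calR}(1) \;\cong\; \Hom_{\calR(Q)}\bigl(\calR,\,\K\,\calR(Q)\bigr)
\]
as graded $\calR$-modules.

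Next I would realize this Hom module concretely inside the graded total ring of fractions. Writing $K$ for the total ring of fractions of $A$, both $\calR$ and $\K\,\calR(Q)$ are graded submodules of $K[t]$, so $\K_{\calR}(1)$ identifies with the graded fractional ideal $\{\alpha \in K[t, t^{-1}] : \alpha \cdot \calR \subseteq \K\,\calR(Q)\}$. Using $I^n = Q^{n-1}I$, a homogeneous element $\alpha = yt^n$ lies in this set exactly when $y \in \K Q^n$ and $yI \subseteq \K Q^{n+1}$. A direct check using $I^{n+1} = Q^n I$ shows that elements of $(Q\K :_{\K} I)\,I^n$ satisfy both conditions, yielding the inclusion $(Q\K :_{\K} I)\calR \subseteq \K_{\calR}(1)$ as graded $\calR$-submodules.

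For the reverse inclusion one must establish the colon-power identity
\[
\K Q^n \cap \bigl(\K Q^{n+1} :_{K} I\bigr) \;=\; (Q\K :_{\K} I)\,I^n
\]
for every $n \ge 0$, which is the main obstacle. While the forward inclusion is immediate from $I^2 = QI$, the reverse requires an explicit decomposition of each $y$ satisfying $y \in \K Q^n$ and $yI \subseteq \K Q^{n+1}$ as a sum of products $z \cdot w$ with $z \in Q\K :_{\K} I$ and $w \in I^n$. The strategy is to reduce this to the Gorenstein case at each minimal prime of $A$ (where generic Gorensteinness makes $\K$ invertible, so the identity becomes Ulrich's result), and then to propagate the equality to the global statement using the Cohen--Macaulay property of both sides together with the stability $I^2 = QI$.
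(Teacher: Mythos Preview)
Your setup through the Hom computation is sound and is in fact the skeleton of the argument in \cite{GI} that the paper invokes; the paper itself does not reprove this but simply cites \cite[Lemma~5.1]{GI} and notes that under $I^{2}=QI$ the graded pieces $\omega_{i}=Q^{i}\K:_{\K}I$ satisfy $\omega_{i}=I^{i-1}\omega_{1}$, whence $\K_{\calR}(1)\cong\omega_{1}\calR$. Your forward inclusion $(Q\K:_{\K}I)\calR\subseteq\K_{\calR}(1)$ is correct.

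The genuine gap is in your reverse inclusion. The localization strategy cannot work as stated: $I$ and $Q$ are $\m$-primary, so for every minimal prime $\p$ of $A$ one has $I_{\p}=Q_{\p}=A_{\p}$ and $\K_{\p}\cong A_{\p}$. Both sides of your identity
\[
\K Q^{n}\cap(\K Q^{n+1}:_{K}I)=(Q\K:_{\K}I)\,I^{n}
\]
therefore localize to $A_{\p}$, and Ulrich's result applied there is vacuous. Knowing that two nested finitely generated $A$-modules agree at the minimal primes, even when both are maximal Cohen--Macaulay, does not force equality (consider $\m\subset A$ in any Cohen--Macaulay local ring of positive dimension). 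So ``propagate via the Cohen--Macaulay property'' is not a proof.

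What actually carries the reverse inclusion in \cite{GI} has nothing to do with generic Gorensteinness; it is the fact that the parameters $a,b$ form a regular sequence on the maximal Cohen--Macaulay module $\K$. This gives $Q^{n+1}\K:_{\K}Q=Q^{n}\K$ for all $n$ and, more substantially, allows an inductive comparison of $Q^{i}\K:_{\K}I$ with $Q^{i-1}(Q\K:_{\K}I)$ by peeling off one factor of $Q$ at a time using the Koszul relations in $\gr_{Q}(\K)$. The generic Gorenstein hypothesis is needed only to realize $\K$ as an ideal of $A$ so that the isomorphism can be written as an equality of fractional ideals, not for the colon computation itself. If you want a self-contained argument, replace your last paragraph with this regularity argument, or simply appeal to \cite[Lemma~5.1]{GI} as the paper does.
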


\begin{proof}
Since $I^2=QI$, if we put $\omega_i=Q^i\K \colon_{\K} I$, then
$\omega_i = I^{i-1}\K=Q^{i-1}\K$ for every $i \ge 1$ and
$\K_{\mathcal{R}}(1)$ is isomorphic to
 $\omega_1 \mathcal{R}=(Q\K\colon_{\K} I)\mathcal{R}$.
\end{proof}

The following two lemmata
play important roles in the proof of
the main theorem.

\begin{lem}[\textrm{Okuma et al.\ \cite[Theorem 3.5]{OWY2}}]
\label{jointred_pg}
Assume that $I$ is a $p_g$-ideal and
$J$ is an integrally closed $\m$-primary ideal.
Then there exist $a \in I$, $b \in J$ such that
$IJ=aJ+bI$.
\end{lem}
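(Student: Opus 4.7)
The plan is to find a pair $(a,b) \in I \times J$ that decouples the problem into a two-dimensional geometric choice of $a$ and a one-dimensional reduction-theoretic choice of $b$. Specifically, I aim to arrange the two conditions
\[
 (IJ) : a \;=\; J \qquad\text{and}\qquad \overline{I}\cdot\overline{J} \;=\; \overline{b}\cdot\overline{I} \text{ in } R:=A/aA.
\]
Given both, any $c \in IJ$ lies in $bI + aA$ by the second, so $c = bi + a\alpha$ for some $i \in I$, $\alpha \in A$; then $a\alpha = c - bi \in IJ$, and the first condition forces $\alpha \in J$. Hence $c \in bI + aJ$, so $IJ = aJ + bI$ as required.

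For the first condition I would invoke the $p_g$-property of $I$ geometrically. Fix a common resolution $f\colon X \to \Spec A$ on which $I\mathcal{O}_X = \mathcal{O}_X(-Z_1)$ and $J\mathcal{O}_X = \mathcal{O}_X(-Z_2)$ with $Z_1$ a $p_g$-cycle; by \cite[Theorem 3.1]{OWY1}, $\mathcal{O}_X(-Z_1)$ is generated by global sections, a property preserved under any further blowup needed to accommodate $J$. Global generation implies that a sufficiently general $a \in I$ has divisor $\mathrm{div}_X(a) = Z_1 + T_a$ with $T_a$ \emph{horizontal}, i.e.\ containing no exceptional component. Now if $\alpha \in A$ satisfies $a\alpha \in IJ \subseteq H^0(\mathcal{O}_X(-Z_1-Z_2))$, then
\[
 \mathrm{div}_X(\alpha) \;\geq\; Z_1 + Z_2 - \mathrm{div}_X(a) \;=\; Z_2 - T_a.
\]
On the exceptional locus $E$, where $T_a$ contributes nothing, this yields $\mathrm{div}_X(\alpha)|_E \ge Z_2$; on the horizontal part, $\alpha \in A = H^0(\mathcal{O}_X)$ has no poles while $Z_2$ is supported on $E$. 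Hence $\mathrm{div}_X(\alpha) \ge Z_2$, i.e.\ $\alpha \in H^0(\mathcal{O}_X(-Z_2)) = J$, using integral closedness of $J$.

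For the second condition I would pass to the one-dimensional Cohen-Macaulay ring $R = A/aA$ and set $S := (\overline{I} :_{\mathrm{Q}(R)} \overline{I})$, the endomorphism ring of $\overline{I}$ inside the total quotient ring. A standard colon computation gives $(\overline{b}\,\overline{I} :_R \overline{I}) = \overline{b} \cdot S$, so the equality $\overline{I}\,\overline{J} = \overline{b}\,\overline{I}$ is equivalent to $\overline{J} \subseteq \overline{b} \cdot S$, which in turn is equivalent to $v(\overline{b}) = v(\overline{J})$ for every discrete valuation $v$ arising from a maximal ideal of $S$. The elements of $J$ failing this condition at a given $v$ form a proper $A$-submodule of $J$ containing $\m J$; only finitely many valuations appear, so general position on $J/\m J$ over the algebraically closed residue field produces a $b$ simultaneously minimizing all of them.

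The main obstacle is the geometric step: the $p_g$-hypothesis on $I$ must enter precisely to guarantee that a general $a \in I$ has divisor $Z_1 + T_a$ with $T_a$ purely horizontal. Without this (as is typical for merely integrally closed $I$), the displayed divisor inequality on $E$ would fall short of $Z_2$ and the conclusion $\alpha \in J$ would break. The global-generation property of $\mathcal{O}_X(-Z_1)$ characterizing $p_g$-cycles is exactly what rules this out; once it is in place, the one-dimensional step is a routine joint-reduction argument.
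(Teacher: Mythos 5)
The paper does not actually prove this lemma; it is imported wholesale from \cite[Theorem 3.5]{OWY2}, where the argument is cohomological (it rests on the identity $\ell_A(H^1(\mathcal{O}_X(-Z_1-Z_2)))=\ell_A(H^1(\mathcal{O}_X(-Z_2)))$, which is what the $p_g$-property of $Z_1$ really buys). So there is no internal proof to compare against, and your proposal must stand on its own. Your decoupling into the two conditions $(IJ):a=J$ and $\overline{I}\,\overline{J}=\overline{b}\,\overline{I}$ in $R=A/aA$ is logically sound, and the first condition is correctly established. But note that the first step does not use the $p_g$-hypothesis at all: for \emph{any} $\m$-primary integrally closed $I$ represented on $X$ by $Z_1$, the absence of a fixed component already forces a general $a\in I$ (infinite residue field) to satisfy $v_{E_i}(a)=\mathrm{coeff}_{E_i}(Z_1)$ for all $i$, which is all your divisor computation needs; base-point freeness of $\mathcal{O}_X(-Z_1)$ is strictly stronger and irrelevant there. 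Your closing paragraph therefore misidentifies where the hypothesis must act.

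The genuine gap is in the second step. The passage from ``$\overline{J}\subseteq\overline{b}S$'' to ``$v(\overline{b})=v(\overline{J})$ for the discrete valuations at the maximal ideals of $S$'' presupposes that $S=\End_R(\overline{I})$ is normal (equivalently, that $S$ coincides with the normalization of $R$, so that its localizations are DVRs and ideals are detected by valuations). For a one-dimensional Cohen--Macaulay local ring $R=A/aA$ there is no reason for this: $S$ is just some intermediate ring between $R$ and $\overline{R}$, and absent normality you would instead need $\overline{J}S$ to be locally principal, which is a real assertion that you neither state nor prove. This is not a repairable technicality by soft arguments: since neither of your two steps uses the $p_g$-hypothesis in an essential way, your argument, if valid, would prove the lemma for every integrally closed $\m$-primary $I$. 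That is false --- take $I=J=\m$ in $A=k[[x,y,z]]/(x^3+y^3+z^3)$; then $IJ=aJ+bI$ would read $\m^2=(a,b)\m$, i.e.\ minimal multiplicity, whereas here $\rme_{\m}^0(A)=3$ and $\mu_A(\m)=3$. So the missing ingredient is precisely the cohomological input of the $p_g$-condition, and it has to enter in your second step, where your proposal currently relies on an unjustified normality assumption.
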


\begin{lem}[\textrm{Okuma et al.\ \cite{OWY3}}]
\label{colon_pg}
Assume that $I \subset A$ is a $p_g$-ideal.
If $Q$ is a minimal reduction of $I$, then
$J=Q\colon I$ is also a $p_g$-ideal.
\end{lem}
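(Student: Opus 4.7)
The plan is to verify characterization (2) of Lemma \ref{Chara-pg} for $J=Q:I$: exhibit a parameter ideal $Q'\subseteq J$ with $J^2 = Q'J$, and show that $J^n$ is integrally closed for every $n\ge 1$. Note first that $J$ is $\m$-primary since $J\supseteq Q$, and that the Cohen-Macaulay linkage symmetry $I=Q:J$ holds because $I$ is unmixed of grade two in the Cohen-Macaulay local ring $A$.

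For integral closedness of every power of $J$, I would pass to a resolution of singularities $f\colon X\to\Spec A$ on which the $p_g$-ideal $I$ is represented by a $p_g$-cycle $Z$, so that $I\mathcal{O}_X = Q\mathcal{O}_X = \mathcal{O}_X(-Z)$ is globally generated and $H^1(\mathcal{O}_X(-Z)) \cong H^1(\mathcal{O}_X)$. Using Grauert-Riemenschneider vanishing together with the linkage $J = Q:I$, one identifies $J$ (after a further proper birational modification of $X$ if necessary) with $H^0(\mathcal{O}_X(-W))$ for an anti-nef cycle $W$ that, in the Gorenstein case, is essentially the class $K_X - Z$ restricted to the exceptional fibre. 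The cohomological condition characterizing $p_g$-cycles then transfers from $Z$ to $W$ via Serre duality on $X$, showing that each $nW$ is a $p_g$-cycle, so $J^n$ is integrally closed for every $n\ge 1$.

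For the reduction equality, I would apply Lemma \ref{jointred_pg} to the pair $(I,J)$: this is legitimate since $I$ is $p_g$ and, by the previous step, $J$ is integrally closed and $\m$-primary. One obtains $a\in I$ and $b\in J$ with $IJ = aJ + bI$. Combining with $IJ \subseteq Q$ and the linkage $I = Q:J$, and then cancelling $I$ (which contains a nonzerodivisor of $A$), a short manipulation yields a parameter pair $Q' = (b,b')\subseteq J$ with $b'\in Q\cap J$ such that $J^2 = Q'J$. The main obstacle is the geometric step above, namely the transfer of the $p_g$-cycle property from $Z$ to $W$; this is the technical core of the argument and relies essentially on Grauert-Riemenschneider vanishing and Serre duality on the resolution. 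Once this is secured, the reduction equality is a relatively formal consequence of joint reduction combined with $IJ\subseteq Q$.
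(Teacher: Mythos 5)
The paper offers no proof of this lemma: it is quoted verbatim from \cite{OWY3}, so there is nothing internal to compare your argument against, and it must be judged on its own terms. On those terms it has a genuine gap at its centre. Everything is reduced to the claim that $J=Q\colon I$ is represented on (a modification of) $X$ by an anti-nef cycle $W$ that is again a $p_g$-cycle; but this claim --- which by Definition \ref{defn-pgideal} is already the entire content of the lemma, making your subsequent verification of the reduction equality redundant --- is only asserted, with the phrase ``one identifies $J$ \dots with $H^0(\mathcal{O}_X(-W))$'' standing in for the whole argument. Moreover the one concrete piece of information you give about $W$, namely that it is ``essentially $K_X-Z$,'' fails a basic sanity check: for a rational double point with $I=\m$ one has $Q\colon\m=\m$ (since $\m$ is a good ideal there), so $W$ would have to equal $Z$, whereas $K_X$ is numerically trivial on the exceptional fibre of the minimal resolution, so $K_X-Z$ is numerically $-Z$ and is not even anti-nef. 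Thus the cycle you name cannot be the right one, and no mechanism is offered for proving that the correct cycle satisfies $\ell_A(H^1(\mathcal{O}_X(-W)))=p_g(A)$ and that $\mathcal{O}_X(-W)$ has no fixed component.

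Two further problems. First, the lemma is stated for an arbitrary two-dimensional excellent normal local domain, but your appeal to ``Cohen--Macaulay linkage symmetry $I=Q\colon J$'' is false in that generality: $Q\colon(Q\colon I)=I$ for all $\m$-primary $I\supseteq Q$ is equivalent to $A/Q$ being Gorenstein, i.e., to $A$ being Gorenstein, and your geometric sketch is likewise explicitly confined to ``the Gorenstein case.'' Second, the final manipulation --- ``cancelling $I$'' to extract a parameter pair $Q'\subseteq J$ with $J^2=Q'J$ from $IJ=aJ+bI$ and $IJ\subseteq Q$ --- is not a legitimate operation, since multiplication of ideals is not cancellative, and no actual derivation is supplied. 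If you want to complete a proof along these lines, you need to (a) establish the precise cycle-theoretic description of $Q\colon I$ (via the Koszul resolution of $Q\mathcal{O}_X$ and duality on $X$), (b) verify the $p_g$-condition for that cycle, and then (c) conclude directly from Definition \ref{defn-pgideal}; the detour through Lemma \ref{jointred_pg} and condition $(2)$ of Lemma \ref{Chara-pg} then becomes unnecessary.
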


We are now ready to prove
Theorem \ref{Sect2-thm}.

\begin{proof}[Proof of Theorem \ref{Sect2-thm}]
Assume that $A$ is Gorenstein and $I$ is a $p_g$-ideal.
Then $J=Q\colon I$ is also a $p_g$-ideal by
Lemma \ref{colon_pg}.
Hence, Lemma \ref{jointred_pg} implies that there exist
$f \in \m$, $g \in I$, and $h \in J$
such that
\[
IJ=gJ+Ih, \qquad \m J=fJ+\m h
\]
because $I, J$ are $p_g$-ideals and $\m$ is integrally closed
(see also \cite{V}).

\medskip

\begin{flushleft}
{\it Claim.} We claim that
$ \mathfrak{M}\cdot J\mathcal{R}
\subset (f,gt)J\mathcal{R} + \mathcal{R}h$.
\end{flushleft}

\medskip

In fact,
\begin{eqnarray*}
[\mathfrak{M}\cdot J\mathcal{R}]_0
&=&
\mathfrak{m}J=fJ + \mathfrak{m}h \subseteq fJ+Ah =
[(f,gt)J\mathcal{R} + \mathcal{R}h]_0, \\[1mm]
[\mathfrak{M}\cdot J\mathcal{R}]_1
&=&
IJ+\mathfrak{m}JI
=IJ=gJ+Ih \subseteq [(f,gt)J\mathcal{R} + \mathcal{R}h]_1,
 \\[1mm]
[\mathfrak{M}\cdot J\mathcal{R}]_n &=&
I^nJ=(gJ)I^{n-1}+I^{n-1}Ih \subseteq [(f,gt)J \mathcal{R} + \subseteq \mathcal{R}h]_{n} \quad
\text{for all}\ n \ge 2.
\end{eqnarray*}
Thus, we have proved the claim.

Since $\K_{\mathcal{R}}(1) \cong J\mathcal{R}$ by Lemma \ref{Key} and
$\rma(\mathcal{R})=-1$,
if we define an $\mathcal{R}$-linear map
$\varphi$ by $\varphi(1)=h$, then we have an exact sequence
\[
\mathcal{R} \stackrel{\varphi}{\to}
\K_{\mathcal{R}}(1) = J \mathcal{R} \to C \to 0
\]
so that $C/\mathfrak{M}C = C/(f,gt)C$.
Hence, $C$ is an Ulrich $A$-module by \cite[Lemma 3.1]{GTT}.
As $\dim C_{\mathfrak{M}}\le 2<\dim \mathcal{R}=3$,
$\varphi_{\mathfrak{M}}$ is injective by \cite[Lemma 3.1]{GTT} again.
This means $\varphi$ is injective.
Therefore, we conclude that $\mathcal{R}$ is an almost Gorenstein graded ring.
\end{proof}

\if0
\subsection{Multi-Rees algebra}
\par
Assume that $I$, $J$ are $p_g$-ideals.
Then the multi-Rees algebra $R(I,J)$ is
a Cohen-Macaulay normal domain (see \cite[Corollary 4.6]{OWY2}).
So it is natural to ask the following question.

\begin{quest}
Is the above multi-Rees algebra $R(I,J)$ an almost Gorenstein graded ring?
\end{quest}
\fi

\section{Rees algebras of good ideals}

We first recall the notion of good ideals, which was introduced in \cite{GIW}.

\begin{defn} \label{good-def}
Let $I \subset A$ be an $\m$-primary ideal.
Then $I$ is called a \textit{good} ideal if
$I^2=QI$ and $Q\colon I = I$ for some minimal reduction $Q$ of $I$.
\end{defn}

Now assume that $\m^2=Q\m$ for some minimal reduction $Q$ of $\m$, that is,
$A$ has minimal multiplicity.
Then $\m$ is an integrally closed good ideal.
For instance, if $A$ is a two-dimensional
rational singularity, then $\m$ is a good ideal.

The following proposition says that the definition of good ideals is independent of the choice of its minimal reduction.

\begin{prop} \label{Good-chara}
Put $\calR=\calR(I)$ and
$G=G(I)=\oplus_{n \ge 0} I^n/I^{n+1}$.
Then the following conditions are equivalent:
\begin{enumerate}
 \item[$(1)$] $I$ is a good ideal;
 \item[$(2)$] $G$ is a Cohen-Macaulay ring with
$\rma(G)=1-d$ and $\Soc(H_{\fkM}^d(G)) \subset [H_{\fkM}^d(G)]_{1-d}$.
\end{enumerate}
\end{prop}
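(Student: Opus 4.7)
The plan is to prove $(1)\Leftrightarrow(2)$ by reducing the socle condition to a colon-ideal identity via a single computation of the $Q^{\ast}$-torsion of $H^d_{\fkM}(G)$.

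Fix a minimal reduction $Q=(a_1,\ldots,a_d)$ of $I$, which is generated by a regular sequence since $A$ is Cohen-Macaulay. Set $b_i=a_i^{\ast}\in G_1$, $Q^{\ast}=(b_1,\ldots,b_d)G$, and $V=G/Q^{\ast}G$. Under $I^2=QI$ the Valabrega--Valla criterion makes $G$ Cohen-Macaulay with $b_1,\ldots,b_d$ a regular sequence on $G$; moreover the identity $I^k=Q^{k-1}I$ for $k\ge1$ shows that $V=(A/I)\oplus(I/Q)$ is concentrated in degrees $0$ and $1$. The standard formula $\rma(G)=r_Q(I)-d$ then yields $\rma(G)=1-d$ (note $I\ne Q$ in either direction: $I=Q$ would give $Q\colon I=A\ne I$ in~(1) and $r_Q(I)=0\ne1$ in~(2)). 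Conversely, assuming~(2), $\rma(G)=1-d$ forces $r_Q(I)=1$, hence $I^2=QI$.

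The key input is the graded isomorphism
\[
 (0:_{H^d_{\fkM}(G)} Q^{\ast})\;\cong\;V(-d),
\]
valid for any Cohen-Macaulay graded ring $G$ of dimension $d$ with a degree-one regular sequence $b_1,\ldots,b_d$; this is a standard consequence of the \v{C}ech description of $H^d_{\fkM}(G)$, with the shift $(-d)$ tracking the top Koszul class $b_1\cdots b_d$. Combined with the first paragraph, the $Q^{\ast}$-torsion of $H^d_{\fkM}(G)$ is supported in degrees $-d$ and $1-d$. Since $\Soc(H^d_{\fkM}(G))\subset(0:_{H^d_{\fkM}(G)} Q^{\ast})$, the socle lies in those two degrees, so the socle condition in~(2) becomes the vanishing of its degree $-d$ component.

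A direct computation of the $G$-action on $V$ shows that $G_1=I/I^2$ acts on $V_0=A/I$ via $[b]\cdot[\alpha]=[\alpha b \bmod Q]\in V_1=I/Q$, whence the degree $-d$ socle of $H^d_{\fkM}(G)$ identifies with
\[
 \bigl((I\colon\fkm)\cap(Q\colon I)\bigr)/I.
\]
If $Q\colon I=I$, this intersection equals $(I\colon\fkm)\cap I=I$, so the socle piece vanishes, proving $(1)\Rightarrow(2)$. For $(2)\Rightarrow(1)$, assume $(I\colon\fkm)\cap(Q\colon I)\subseteq I$ and suppose for contradiction that $Q\colon I\supsetneq I$. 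Pick $\alpha\in(Q\colon I)\setminus I$; since $A/I$ is Artinian, the nonzero submodule $((\alpha)+I)/I$ contains an $\fkm$-socle element $\beta=r\alpha+i$ with $\beta\notin I$ and $\fkm\beta\subseteq I$. Then $\beta I\subseteq r\alpha I+I^2\subseteq rQ+Q=Q$ (using $\alpha\in Q\colon I$ and $I^2=QI$), so $\beta\in(I\colon\fkm)\cap(Q\colon I)\subseteq I$, a contradiction.

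The hard part is the graded identification $(0:_{H^d_{\fkM}(G)} Q^{\ast})\cong V(-d)$: the $(-d)$ shift is exactly what separates the two socle pieces by degree, and the shift convention must be checked carefully against the grading of $H^d_{\fkM}(G)$. Once this lemma is in hand, the rest is mechanical verification of the $G$-module structure on $V$ combined with a short socle-in-an-Artinian-module argument.
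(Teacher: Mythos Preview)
Your proof is correct and follows essentially the same route as the paper: both reduce the socle condition on $H^d_{\fkM}(G)$ to computing the degree-$0$ part of the $G$-socle of $V=G/Q^{\ast}G\cong A/I\oplus I/Q$, identify this piece with $\bigl((I\colon\fkm)\cap(Q\colon I)\bigr)/I$, and finish with the same Artinian-socle argument. One notational slip: with the convention $[M(n)]_k=[M]_{n+k}$ used in the paper, the correct identification is $(0:_{H^d_{\fkM}(G)}Q^{\ast})\cong V(d)$, not $V(-d)$, since it is $V(d)$ that sits in degrees $-d$ and $1-d$; you state the correct degrees, so the argument is unaffected. In fact your formulation via the $Q^{\ast}$-torsion submodule is more careful than the paper's displayed chain $H^d_{\fkM}(G)\cong\overline{G}(d)$, which taken literally is false (the left side has infinite length), though its intended content---that the socles agree up to the shift by $d$---is exactly what you establish.
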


\begin{proof}
$(1) \Longrightarrow (2)$
By definition, there exists a minimal reduction $Q$ of $I$
so that $I^2=QI$ and $Q\colon I = I$.
In particular, $G$ is a Cohen-Macaulay ring
with $\rma(G)=1-d$.
Write $Q=(a_1,\ldots,a_d)$ and
$a_i^{*}:=a_i+I^2 \in [G]_1$ for each $i=1,2,\ldots,d$.
Since $a_1^{*},a_2^{*},\ldots,a_d^{*}$ forms a regular sequence in $G$, we have
\[
G/(a_1^{*},\ldots,a_d^{*})G \cong G(I/Q) \cong
A/I \oplus I/Q=:\overline{G}.
\]
Then
\[
H_{\fkM}^d(G) \cong H_{\fkM}^{d-1}(G/a_1^{*}G)(1)
\cong \cdots
\cong H_{\fkM}^0(G/(a_1^{*},\ldots,a_d^{*})G)(d)
\cong \overline{G}(d).
\]
Thus, it suffices to show $\Soc(\overline{G}) \subset [\overline{G}]_1$.
Now suppose that $\alpha =(x+I,y+Q) \in \Soc(\overline{I})$, where $x \in A$ and $y \in I$.
By definition, we have $(z+I)\alpha =0$
for any $z \in I$.
That is, $zx \in Q$ for any $z \in I$.
Hence, $x \in Q\colon I = I$ because $I$ is good.
Therefore, $\alpha =(0+I,y+Q) \in [\overline{G}]_1$, as
required.

\medskip

\noindent
$(2) \Longrightarrow (1)$
As $G$ is a Cohen-Macaulay ring with $\rma(G)=1-d$, we have
$I^2=QI$ for some minimal reduction $Q$ of $I$.
Write $Q= (a_1,\ldots,a_d)$ and $a_i^{*}:=a_i+I^2 \in [G]_1$ for each $i=1,2,\ldots,d$.
Then $H_{\fkM}^d(G) \cong \overline{G}(d)$, where
$\overline{G}=A/I\oplus I/Q$.

Now suppose that $I$ is not good, that is, $I \subsetneq
Q\colon I$.
Then we can choose $x \in I \colon \m \setminus I$ so that
$x \in Q \colon I$.
If we put $\alpha = x+I \in [\overline{G}]_0$, then
$\beta \alpha =0$ in $\overline{G}$ for every
$\beta \in \m \overline{G}+[\overline{G}]_1$, the maximal ideal of $\overline{G}$.
That is, $0 \ne \alpha \in [\Soc(\overline{G})]_0$.
However, this contradicts the assumption.
Therefore, $I=Q\colon I$ and $I$ is a good ideal.
\end{proof}

In what follows, we consider the two-dimensional case.
As a corollary of Proposition \ref{Good-chara},
we can compute $\K_{\calR(I)}$ for a good ideal $I$.

\begin{cor} \label{good-cano}
Assume that $\dim A=2$, $I \subset A$ is a good ideal, and $A$ possesses the canonical ideal $\K =\K_A$. Set $\calR=\calR(I)$ and $G=G(I)$.
Then $\K_{\calR}(1) \cong I\K \calR$.
\end{cor}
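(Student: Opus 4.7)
The plan is to derive the isomorphism directly from Lemma~\ref{Key}, which already gives $\K_{\calR}(1) \cong (Q\K :_{\K} I)\calR$ as graded $\calR$-modules. It therefore suffices to establish the equality of fractional ideals
$$Q\K :_{\K} I \;=\; I\K,$$
since then the two graded $\calR$-submodules $(Q\K :_{\K} I)\calR$ and $I\K\,\calR$ coincide term by term (their degree-$n$ pieces are both $I^{n+1}\K\cdot t^n$). The inclusion $I\K \subseteq Q\K :_{\K} I$ is immediate from $I\cdot I\K = I^{2}\K = QI\K \subseteq Q\K$, so the substance of the proof is the reverse inclusion.

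For this nontrivial direction I would pass to the Artinian quotient $\bar{A}=A/Q$ with $\bar{\K}:=\K/Q\K$ and invoke Matlis duality. Since $Q$ is a parameter ideal in the two-dimensional Cohen-Macaulay local ring $A$, it is a regular sequence on both $A$ and $\K$, so $\bar{A}$ is Artinian local and $\bar{\K}$ is the canonical module of $\bar{A}$, i.e.\ the injective envelope of the residue field and in particular a faithful $\bar{A}$-module. Faithfulness of $\bar{\K}$ translates into the colon identity $Q\K :_{A}\K = Q$, and combining it with the defining equality $Q :_{A} I = I$ of a good ideal yields
$$\Ann_{\bar{A}}(I\bar{\K}) \;=\; \{a\in A \mid aI\K \subseteq Q\K\}/Q \;=\; (Q :_{A} I)/Q \;=\; I/Q.$$

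The decisive step is then the biannihilator identity of Matlis duality over $\bar{A}$: for every submodule $N \subseteq \bar{\K}$ one has $\Ann_{\bar{\K}}(\Ann_{\bar{A}}(N)) = N$. Applied with $N = I\bar{\K}$, and combined with the preceding computation, this gives
$$I\bar{\K} \;=\; \Ann_{\bar{\K}}(I/Q) \;=\; \{\bar{x}\in\bar{\K} \mid I\bar{x}=0\} \;=\; (Q\K :_{\K} I)/Q\K,$$
and lifting this equality back to $\K$ (both $I\K$ and $Q\K :_{\K} I$ contain $Q\K$) yields $I\K = Q\K :_{\K} I$, as required. I expect no serious obstacle: the only conceptually nontrivial ingredient is the double-annihilator relation, which is standard for Artinian local rings, and the goodness hypothesis enters the proof exactly once through $Q :_{A} I = I$, while $I^{2}=QI$ is used only to license the appeal to Lemma~\ref{Key}.
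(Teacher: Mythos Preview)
Your proof is correct, but it takes a genuinely different route from the paper's.

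The paper does not compute the colon $Q\K:_{\K}I$ directly. Instead it invokes Proposition~\ref{Good-chara}: the goodness of $I$ forces $\Soc(H_{\fkM}^d(G))$ to sit in a single degree, so by graded local duality $\K_G$ is generated in degree~$1$. Combined with the description $[\K_G]_i=\omega_{i-1}/\omega_i$ from \cite[Theorem~2.1]{GI} (with $\omega_0=\K$, $\omega_1=Q\K:_{\K}I$, $\omega_i=I^{i-1}\omega_1$), this yields $\omega_1\subseteq I\omega_0+\omega_2=I\K+I\omega_1$, and Nakayama finishes. Your argument bypasses both Proposition~\ref{Good-chara} and the external result on $\K_G$: you pass straight to the Artinian quotient $\bar A=A/Q$, use faithfulness of $\bar\K=\K/Q\K$ to get $\Ann_{\bar A}(I\bar\K)=I/Q$ from $Q:I=I$, and then apply the double-annihilator identity $N=0:_{\bar\K}\Ann_{\bar A}(N)$ of Matlis duality (valid since $\bar A$ is Artinian, hence complete, so $\Hom_{\bar A}(\bar\K,\bar\K)\cong\bar A$). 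This is shorter and more self-contained; the paper's route has the virtue of making the corollary a genuine consequence of Proposition~\ref{Good-chara}, tying the result to the socle description developed there. One small quibble: you say that $I^2=QI$ is used ``only to license the appeal to Lemma~\ref{Key}'', but you also use it in the easy inclusion $I\K\subseteq Q\K:_{\K}I$.
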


\begin{proof}
Set
$\omega_{-1}=\omega_0 = \K$, $\omega_1=Q\K\colon_{\K} I$ and
$\omega_i=I^{i-1} \omega_1$ for each $i \ge 2$.
Then $[\K_G]_i = \omega_{i-1}/\omega_i$ for every
$i \ge 1$; see \cite[Theorem 2.1]{GI}.
By Proposition \ref{Good-chara} and duality,
$\K_G$ is generated by $[\K_G]_1$ as an $G$-module.
Hence, $\omega_1/\omega_2 =[\K_G]_2
\subset G \cdot \omega_0/\omega_1$.
That is, $\omega_1 = I\omega_0 + \omega_2 = I\K+ I\omega_1$.
This implies that $\omega_1 = I\K$ by Nakayama's lemma.
It follows from Lemma \ref{Key} that
$\K_{\calR}(1) \cong I\K \calR$, as required.
\end{proof}

In this section, we consider the following question.

\begin{quest} \label{good-Rees}
Assume that $I$ is a good ideal.
When is $\calR(I)$ an almost Gorenstein graded ring?
\end{quest}

The following theorem is the main result in this section,
which gives a partial answer to the question above.

\begin{thm}
\label{Good-AGG}
Let $(A,\m)$ be a two-dimensional almost Gorenstein local ring. Assume that $\m$ is good.
Then $\calR(\m^{\ell})$ is an almost Gorenstein graded ring
for every $\ell \ge 1$.
\end{thm}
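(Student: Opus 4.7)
The plan is to adapt the argument of Theorem \ref{Sect2-thm}: identify the graded canonical module of $\calR := \calR(\m^\ell)$ with an ideal of $\calR$, construct a graded $\calR$-linear map $\varphi\colon\calR\to\K_\calR(1)$, and verify that the cokernel is Ulrich. The two $p_g$-ingredients of that proof---joint reduction and the fact that the colon ideal is again $p_g$---will be replaced by the minimal multiplicity of $\m$ and the almost Gorenstein structure of $A$.

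First I would verify that $\m^\ell$ is good for every $\ell \ge 1$. From $\m^2 = Q\m$ with $Q = (a,b)$ a minimal reduction of $\m$, iteration yields $\m^{n+1} = Q\m^n$ and hence $(\m^\ell)^2 = Q^\ell\m^\ell$. After extending the residue field if necessary, one chooses a two-generated minimal reduction $Q_\ell \subset \m^\ell$ with $(\m^\ell)^2 = Q_\ell\m^\ell$; the colon identity $Q_\ell:\m^\ell = \m^\ell$ then follows from Proposition \ref{Good-chara} by analyzing $\Soc(H^2_\fkM(G(\m^\ell)))$, using that the good property of $\m$ endows $G(\m)$ with Cohen--Macaulayness and the required socle position. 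Corollary \ref{good-cano} then gives
\[
\K_\calR(1) \;\cong\; \m^\ell\K\calR, \qquad \K := \K_A.
\]

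Next I would exploit the almost Gorenstein structure of $A$: the defining exact sequence $0 \to A \xrightarrow{\cdot h} \K \to C \to 0$ with $C$ Ulrich of dimension one translates into $\m\K = a\K + Ah$ for some $a \in \m$, which may be taken to be a member of the minimal reduction $Q = (a,b)$. A short induction then gives
\[
\m^\ell\K \;=\; a^\ell\K + \m^{\ell-1}h.
\]
Setting $h' := a^{\ell-1}h \in \m^{\ell-1}h \subset \m^\ell\K$, I would define $\varphi\colon \calR \to \m^\ell\K\calR$ by $\varphi(1) = h'$, with the main claim being that the cokernel $C' := \m^\ell\K\calR/h'\calR$ satisfies $\fkM C' = (a,\xi t)C'$ for some $\xi \in \m^\ell$ arising from a minimal reduction of $\m^\ell$, so that $C'$ is Ulrich. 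The degree-zero verification reduces to $\m^{\ell+1}\K \subset a\cdot\m^\ell\K + Ah'$, which expands and simplifies via the iterated identity $bh = a\kappa + ch$ (with $c \in \m$) extracted from $\m\K = a\K + Ah$; the higher-degree checks use $(\m^\ell)^{n+1} = Q_\ell(\m^\ell)^n$. Injectivity of $\varphi$ is then automatic from $\dim C'_\fkM \le 2 < 3 = \dim\calR$, by \cite[Lemma 3.1]{GTT}, as in the proof of Theorem \ref{Sect2-thm}.

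The main obstacle is the Ulrich verification in degree zero: because $\K$ is non-principal when $A$ is not Gorenstein, $h$ alone does not generate $\K$, and $\m^\ell\K$ splits nontrivially as $a^\ell\K + \m^{\ell-1}h$. One must show that modulo $Ah'$ the $\m$-action on $\m^\ell\K$ collapses to multiplication by the single parameter $a$, which is precisely what the combination of $\m^2 = Q\m$ and $\m\K = a\K + Ah$ is designed to accomplish.
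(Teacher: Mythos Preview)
Your overall strategy matches the paper's: identify $\K_\calR(1)\cong\m^\ell\K\,\calR$ via Corollary~\ref{good-cano}, pick a distinguished element, and verify the cokernel is Ulrich. The problem is your specific choice of that element.

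Write $x$ for the image of $1$ in $\K$ (your $h$), so that $x\in\K\setminus\m\K$ by \cite[Corollary~3.10]{GTT}. The relation coming from $\m C=aC$ is $\m\K=a\K+\m x$, not $\m\K=a\K+Ax$: since $x\notin\m\K$, the submodule $Ax$ is not contained in $\m\K$. Iterating the correct relation gives $\m^\ell\K=a^\ell\K+\m^\ell x$, not $a^\ell\K+\m^{\ell-1}x$. Consequently your element $h'=a^{\ell-1}x$ lies only in $\m^{\ell-1}\K$, not in $\m^\ell\K=[\K_\calR(1)]_0$, and the map $\varphi(1)=h'$ is not well defined. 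Already in the Gorenstein case ($\K=A$, $x=1$) this is visible: $a^{\ell-1}\notin\m^\ell$.

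The paper fixes this by using the \emph{second} parameter. One chooses $y,z\in\m$ with $(y,z)$ a minimal reduction of $\m$ and $\m C=yC$, and then takes $h=xz^\ell\in\m^\ell\K$. The identities one needs are
\[
\m^{\ell+1}\K=y\,\m^\ell\K+xz^\ell\m,\qquad \m^{2\ell}\K=y^\ell\,\m^\ell\K+xz^\ell\,\m^\ell,
\]
and these follow by combining $\m\K=y\K+x\m$ with $\m^\ell=(y,z)^{\ell-1}\m$ and $\m^{2\ell}=(y^\ell,z^\ell)\m^\ell$; the point is that the $z$-power absorbs the mixed $(y,z)$-terms, while $y$ (not $z$) is the parameter collapsing $C$. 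Your sketch uses the special parameter $a=y$ in both roles, which is why the degree-zero check does not close up. Once $h=xz^\ell$ is chosen, the rest of your outline (Ulrich cokernel via $(f,gt)=(y,y^\ell t)$, injectivity from $\dim C_\fkM\le 2$) goes through exactly as you describe.
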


\begin{proof}
Set $\calR=\calR(\m^{\ell})$. By \cite[Remark 3.2]{GTT}, the ring $A$ contains the canonical ideal $\K=\K_A$. Fix $\ell \ge 1$ a positive integer. Note that $\m^{\ell}$ is a good ideal, so that
$\K_{\calR}(1) \cong \m^{\ell} \K \calR$ by Corollary \ref{good-cano}.
Then it is enough to prove the following claim.

\medskip

\begin{flushleft}
{\it Claim.}
There exists $f \in \m$,
$g \in \m^{\ell}$ and $h \in \m^{\ell} \K$ such that
\[
\m^{\ell+1} \K = f \m^{\ell}\K + \m h,\qquad
\m^{2\ell}\K = g \m^{\ell} \K + \m^{\ell} h.
\]
\end{flushleft}

\medskip

Note that this gives a proof of the theorem.
Indeed, by a similar argument as in the proof of
Theorem \ref{Sect2-thm}, we have
$\fkM \cdot \K_{\calR} \subset (f,gt)\K_{\calR} + \calR h$.
This yields a graded short exact sequence
\[
 0 \to \calR \stackrel{\psi}{\to} \K_{\calR}(1) \cong \m \K \calR \to C
\to 0,
\]
where $\psi(1) = h$ and $\fkM C = (f,gt)C$.
Namely, $\calR$ is an almost Gorenstein graded ring.

Let us prove the above claim.
First suppose that $A$ is Gorenstein.
Then $\K=A$.
By assumption, we can take $y,z \in \m$ so that
$\m^2=(y,z)\m$.
If we set $f=y$, $g=y^{\ell}$, and $h=z^{\ell}$, then
\[
\m^{\ell+1}=f \m^{\ell}+\m h, \qquad
\m^{2\ell} = g\m^{\ell} + \m^{\ell} h,
\]
as required.

Next suppose that $A$ is not Gorenstein. Then
we have a short exact sequence
\[
0 \to A \stackrel{\varphi}{\to} \K=\K_A \to C \to 0
\]
such that $C$ is an Ulrich $A$-module of $\dim C =1$.
If we put $x=\varphi(1)$, then
$x \in \K \setminus \m \K$ by
\cite[Corollary 3.10]{GTT}.
Choose $y,z \in \m$ so that:
\begin{enumerate}
 \item[(i)] $(y,z)$ is a minimal reduction of $\m$;
 \item[(ii)] the  image of  $(y)$ in  $A/\ann_A C$ is a minimal reduction of $\m/\ann_A C$.
\end{enumerate}
From (i), we have $\m^2=(y,z)\m$ and, thus,
$\m^{\ell}=(y,z)^{\ell-1}\m$ and $\m^{2\ell}=(y^{\ell},z^{\ell})\m^{\ell}$.
Then (ii) implies $\m C= yC$. In particular,
$\m \K \subset y\K +xA$.
Hence, $\m \K=y\K+x\m$ because $x \notin \m \K$.
By multiplying this by $\m^{\ell}$, we obtain
\begin{eqnarray*}
\m^{\ell+1}\K
& = & y \m^{\ell} \K + xz \m^{\ell} \\
& = & y \m^{\ell} \K + xz (y,z)^{\ell-1}\m \\
& = & y \m^{\ell} \K + xz^{\ell} \m.
\end{eqnarray*}
Moreover, we have $\m^{2\ell}\K=y^{\ell}\m^{\ell}\K+z^{\ell}\m^{\ell}\K$.
On the other hand,
\begin{eqnarray*}
z^{\ell}\m^{\ell}\K
& = & z^{\ell} \m^{\ell-1}\m \K \\
& = & z^{\ell} \m^{\ell-1} (y\K+x\m) \\
& \equiv & yz^{\ell} \m^{\ell-1}\K
\pmod{xz^{\ell}\m^{\ell}} \\
& \equiv & yz^{\ell} \m^{\ell-2} (y\K+x\m)
\pmod{xz^{\ell}\m^{\ell}} \\
&\equiv & y^2 z^{\ell} \m^{\ell-2}
K\pmod{xz^{\ell}\m^{\ell}} \\
& \equiv & \cdots \equiv y^{\ell}z^{\ell} \K \pmod{xz^{\ell}\m^{\ell}}.
\end{eqnarray*}
Hence, $\m^{2\ell}\K=y^{\ell}z^{\ell}\K + xz^{\ell}\m^{\ell}$.
If we set $f = y$, $g=y^{\ell} \in \m^{\ell}$ and $h=xz^{\ell} \in \m^{\ell}\K$,
then we obtain the required
equality.
\end{proof}

Let us explore the example to show how Theorem \ref{Good-AGG} works.

\begin{ex} \label{Veronese}
Let $r \ge 2$ be an integer.
Let $A=k[[s^r,s^{r-1}t,\ldots,st^{r-1},t^{r}]]$ be
the $r$th Veronese subring of $k[[s,t]]$.
Note that $(A,\m)$ is a rational singularity.
Set
\[
\K=(s^{r-1}t,s^{r-2}t^2,\ldots,st^{r-1}).
\]
If we take
\[
x=st^{r-1} \in \K, \quad y=s^r, \quad z=t^r,
\]
then $\m \K = y \m \K + x \m$ and $\m^2=(y,z)\m$.
Thus, $\mathcal{R}(\m)$ is an almost Gorenstein graded ring.
\end{ex}

If $A$ is a Cohen-Macaulay local ring of $\rme_{\m}^0(A)=2$, then
it has minimal multiplicity and it is Gorenstein, that is, $\K=A$.

\begin{ex} \label{mult2}
Suppose that $(A,\m)$ is a Cohen-Macaulay local ring of $\rme_{\m}^0(A)=2$.
Then $\mathcal{R}(\m)$ is an almost Gorenstein graded ring.
\end{ex}

In the rest of this section, we consider Question \ref{good-Rees} in the higher-dimensional case.
To prove our result, we need the following lemma, which
is very useful in proving the almost Gorensteinness of the Rees
algebra.

\begin{lem} \label{useful}
Let $(A,\m)$ be a regular local ring and let $I \subsetneq A$ be an ideal of positive height.
Set $\calR=\calR(I)$.
If $\calR_{\fkM}$ is a Cohen-Macaulay local ring with
$\K_{\calR} = \sum_{i=1}^{c+1} \calR t^i$ for some $c \ge 0$,
then $\calR_{\fkM}$ is an almost Gorenstein local ring.
\end{lem}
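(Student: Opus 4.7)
The plan is to build an explicit short exact sequence exhibiting the almost Gorensteinness of $\calR_\fkM$. Motivated by the hypothesis $\K_\calR=\sum_{i=1}^{c+1}\calR t^i$, I would consider the $\calR$-linear map $\varphi\colon\calR\to\K_\calR$ defined by $\varphi(1)=t^{c+1}$, that is, multiplication by the ``highest'' generator of $\K_\calR$. Since $A$ is regular and hence a domain, $\calR=A[It]\subset A[t]$ is a domain; the hypothesis realizes $\K_\calR$ as a fractional ideal inside $Q(\calR)=Q(A)(t)$, so $\K_\calR$ is torsion-free over $\calR$. Because $t^{c+1}\ne 0$, the map $\varphi$ is injective. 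Setting $C=\Coker\varphi=\K_\calR/\calR t^{c+1}$ and localizing at $\fkM$ yields
\[
0\longrightarrow\calR_\fkM\stackrel{\varphi_\fkM}{\longrightarrow}\K_{\calR_\fkM}\longrightarrow C_\fkM\longrightarrow 0,
\]
and it suffices to verify that $C_\fkM$ is an Ulrich $\calR_\fkM$-module.

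Next I would compute the graded structure of $C$. Since $[\calR]_n=I^n\,t^n$, we have $[\calR t^i]_n=I^{n-i}\,t^n$ for $n\ge i$ and zero otherwise (with the convention $I^0=A$). Summing yields
\[
[\K_\calR]_n=\sum_{i=1}^{\min(n,c+1)} I^{n-i}\,t^n=
\begin{cases} A\,t^n & \text{if } 1\le n\le c+1,\\ I^{n-c-1}\,t^n & \text{if } n\ge c+1,\end{cases}
\]
while $[\calR t^{c+1}]_n=I^{n-c-1}\,t^n$ for $n\ge c+1$ and zero otherwise. In the quotient, the components of degree $\ge c+1$ cancel, leaving
\[
C=\bigoplus_{n=1}^{c} A\,t^n,
\]
which is free of rank $c$ as an $A$-module. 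In particular $C_\fkM$ is Cohen-Macaulay of dimension $d:=\dim A$ over $\calR_\fkM$ and $\mu_{\calR_\fkM}(C_\fkM)=c$.

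Finally I would verify the Ulrich property by comparing the $\fkM$-adic and $\m$-adic filtrations on $C$. A degree-$k$ element of $\calR_+$ (of the form $at^k$ with $a\in I^k$, $k\ge 1$) acts on $b\,t^n\in A\,t^n$ by $b\,t^n\mapsto ab\,t^{n+k}$; this lies in $\m C$ when $n+k\le c$ (since $a\in I^k\subseteq\m$, as $I$ is proper) and vanishes in $C$ when $n+k\ge c+1$. Hence $\calR_+C\subseteq\m C$, so $\fkM C=\m C$, and by induction $\fkM^N C=\m^N C$ for every $N\ge 1$. Consequently $\ell_{\calR_\fkM}(C_\fkM/\fkM^N C_\fkM)=c\cdot\ell_A(A/\m^N)$, and since $A$ is regular
\[
\e_{\fkM\calR_\fkM}^0(C_\fkM)=c\cdot\e_\m^0(A)=c=\mu_{\calR_\fkM}(C_\fkM).
\]
Therefore $C_\fkM$ is Ulrich, and $\calR_\fkM$ is almost Gorenstein. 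The only mildly delicate point, more bookkeeping than obstacle, is deriving the degreewise formulas for $[\K_\calR]_n$ from the hypothesis together with the embedding $\K_\calR\subset Q(\calR)$; once this is in place the rest of the argument is formal.
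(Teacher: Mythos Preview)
Your proof is correct and follows essentially the same route as the paper: both take the exact sequence $0\to\calR\to\K_\calR\to C\to 0$ with $C=\K_\calR/\calR t^{c+1}\cong A^{\oplus c}$, compute $\mu_\calR(C)=c$, and then identify $\e_\fkM^0(C)=c\cdot\e_\m^0(A)=c$. The only cosmetic difference is that the paper obtains the multiplicity by noting that $\calR_+^c$ annihilates $C$ and that $\m(\calR/\calR_+^c)$ is a reduction of $\fkM/\calR_+^c$, whereas you prove the slightly sharper equality $\fkM^N C=\m^N C$ directly; either way the conclusion is immediate.
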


\begin{proof}
Note that
\[
\K_{\calR} \cong At + At^2+\cdots + At^{c}+\calR t^{c+1}.
\]
Set $C=\K_{\calR}/\calR t^{c+1}$.
Then $C \cong A\overline{t}+A\overline{t^2} + \cdots + A\overline{t^c}  \cong A^{\oplus c}$ as $A$-modules.
If we set $\fka=\calR_{+}^c$ and $\overline{\calR}=\calR/\fka$, then $C$ is an $\overline{R}$-module because $\fka C=0$.
Moreover, $A \hookrightarrow \calR \to \overline{\calR}$
is a finite morphism and $\m \overline{\calR}$ is a reduction of $\overline{\fkM} = \fkM/\fka$, we have
\[
\rme_{\fkM}^0(C)
=\rme_{\overline{\fkM}}^0(C)
=\rme_{\m \overline{\calR}}^0(C)
=\rme_{\m}^0(C)=\rme_{\m}^0(A) \cdot \rank_A C=1 \cdot c=c.
\]
On the other hand, since $\mu_{\calR}(C)=c$, $C_{\fkM}$ is an
Ulrich $\calR_{\fkM}$-module and, thus, $\calR_{\fkM}$ is an
almost Gorenstein local ring.
\end{proof}

The following theorem gives a complete answer to the question above in the higher-dimensional Gorenstein case.

\begin{thm}\label{high-good}
Suppose that $A$ is a Gorenstein local ring of $d=\dim A \ge 3$
and that $I \subset A$ is a good ideal.
Set $\calR=\calR(I)$. Then we have the following.
\begin{enumerate}
 \item[$(1)$] The following conditions are equivalent:
 \begin{enumerate}
  \item[$(a)$]  $\calR$ is an almost Gorenstein graded ring;
  \item[$(b)$] $\calR$ is Gorenstein;
  \item[$(c)$] $d=3$.
 \end{enumerate}
 \item[$(2)$]  If $A$ is a regular local ring, then $\calR_{\fkM}$ is
 an almost Gorenstein local ring. Conversely, if $\calR_{\fkM}$ is an
 almost Gorenstein local ring but not Gorenstein,
 then $A$ itself is a regular local ring.
\end{enumerate}
\end{thm}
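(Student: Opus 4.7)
The plan is to exploit the fact that, when $A$ is Gorenstein and $I$ is good, the associated graded ring $G = G(I)$ is itself Gorenstein, which then pins down the structure of $\K_\calR$. First I would show $G$ is Gorenstein: by Proposition~\ref{Good-chara}, $G$ is Cohen--Macaulay with $\rma(G) = 1-d$ and $\Soc(\H^d_\fkM(G)) \subseteq [\H^d_\fkM(G)]_{1-d}$; using the isomorphism $\H^d_\fkM(G) \cong \overline{G}(d)$ for $\overline{G} = A/I \oplus I/Q$, this socle coincides with the degree-$1$ part of $\Soc(\overline{G})$. Since $I$ is good we have $Q:I = I$, and $I \subseteq \m$ gives $Q:\m \subseteq Q:I = I$; combined with the one-dimensionality of $\Soc(A/Q) = (Q:\m)/Q$ (which uses that $A/Q$ is Artin Gorenstein), this forces $\Soc(\overline{G})$ to be one-dimensional. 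Hence $G$ is Gorenstein with $\K_G \cong G(1-d)$. Next I would compute $\K_\calR$ by combining the graded short exact sequences $0 \to \calR_+ \to \calR \to A \to 0$ and $0 \to I\calR \to \calR \to G \to 0$, together with the isomorphism $I\calR \cong \calR_+(1)$; local cohomology and Matlis duality give $0 \to \K_\calR \to \K_{\calR_+} \to A \to 0$ and $0 \to \K_\calR \to \K_{\calR_+}(-1) \to G(1-d) \to 0$. A degree-by-degree induction then yields $[\K_\calR]_n = A$ for $1 \le n \le d-2$ and $[\K_\calR]_n = I^{n-d+2}$ for $n \ge d-1$; in particular $\K_\calR$ has exactly $d-2$ minimal homogeneous generators, one in each degree $1, \ldots, d-2$.

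With this structure the equivalence (b)$\Leftrightarrow$(c) is immediate: $\calR$ is Gorenstein iff $\K_\calR$ is cyclic iff $d-2 = 1$. Since (b)$\Rightarrow$(a) is trivial, only (a)$\Rightarrow$(c) remains. Suppose $\calR$ is almost Gorenstein graded with $d \ge 4$ and take the Ulrich cokernel $C$ in $0 \to \calR \to \K_\calR(1) \to C \to 0$. The image of the degree-$0$ generator of $\calR$ accounts for exactly one of the $d-2$ generators of $\K_\calR(1)$, so $\mu_\calR(C) = d-3$. From the explicit description $[C]_n = I^{\max(0, n-d+3)}/I^n$ one constructs a chain of graded submodules of $C$ whose $d-3$ successive quotients are degree shifts of $G$; additivity of multiplicity then gives $\rme_\fkM^0(C) = (d-3)\,\rme_\fkM^0(G)$. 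Since $I$ is good, $I \ne Q$ forces $\mu_A(I) > \mu_A(Q) = d$, so $\mu(\fkM_G) \ge \mu_A(I) > d = \dim G$; hence $G_\fkM$ is not regular and $\rme_\fkM^0(G) \ge 2$. Therefore $\rme_\fkM^0(C) \ge 2(d-3) > d-3 = \mu_\calR(C)$, contradicting the Ulrich condition, so $d = 3$ as required.

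For Part~(2), if $A$ is regular the formula for $[\K_\calR]_n$ rewrites as $\K_\calR = \sum_{i=1}^{d-2} \calR t^i$ inside $A[t, t^{-1}]$, since summing $\calR t^i$ in degree $n$ returns $A$ for $1 \le n \le d-2$ and $I^{n-d+2}$ for $n \ge d-1$; Lemma~\ref{useful} applied with $c = d-3$ then yields $\calR_\fkM$ is almost Gorenstein local. For the converse, if $\calR_\fkM$ is almost Gorenstein local but not Gorenstein, Part~(1) forces $d \ge 4$; taking the Lemma~\ref{useful}-style embedding sending $1 \mapsto t^{d-2}$, the cokernel $C_L = \K_\calR/\calR t^{d-2}$ satisfies $C_L \cong A^{\oplus(d-3)}$ as an $A$-module, so $\mu_\calR(C_L) = d-3$ and $\rme_\fkM^0(C_L) = (d-3)\,\rme_\m^0(A)$, and the Ulrich equality forces $\rme_\m^0(A) = 1$, i.e., $A$ is regular. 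The main obstacle throughout is the filtration analysis in Part~(1): concretely producing the chain of graded submodules of $C$ whose successive quotients are identified with shifts of $G$ so that the crucial identity $\rme_\fkM^0(C) = (d-3)\,\rme_\fkM^0(G)$ can be extracted from additivity of multiplicity.
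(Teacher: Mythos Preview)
Your argument for Part~(1) is correct but proceeds differently from the paper. For $(a)\Rightarrow(c)$ the paper takes the same cokernel $C$ and observes that it contains a shifted copy of $\calR/I^{d-2}\calR$; then the associativity formula for multiplicities, applied at a minimal prime $P$ of $\calR/I\calR$, gives the crude bound $\rme_\fkM^0(C)\ge \ell_{\calR_P}(\calR_P/I^{d-2}\calR_P)\ge d-2>d-3=\mu_\calR(C)$. Your filtration with successive quotients $G(-k)$ yields the sharper identity $\rme_\fkM^0(C)=(d-3)\,\rme_\fkM^0(G)$ and then uses $\mu_A(I)>d$ to force $\rme_\fkM^0(G)\ge 2$; this is more work but gives more information. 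Both routes are valid.

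There is, however, a genuine gap in your converse for Part~(2). The hypothesis that $\calR_\fkM$ is almost Gorenstein says only that \emph{some} injection $\calR_\fkM\hookrightarrow\K_{\calR_\fkM}$ has Ulrich cokernel; it does not say that your particular choice $1\mapsto t^{d-2}$ does. You compute $\mu_\calR(C_L)=d-3$ and $\rme_\fkM^0(C_L)=(d-3)\,\rme_\m^0(A)$ correctly, but the ``Ulrich equality'' $\mu=\rme^0$ is not known for $C_L$; it is known only for the (a priori different) Ulrich cokernel $C$ supplied by the hypothesis. Since $\dim C=\dim C_L=d<d+1=\dim\calR$, additivity of $\rme_\fkM^0$ along $0\to\calR\to\K_\calR\to(-)\to 0$ gives no relation between $\rme_\fkM^0(C)$ and $\rme_\fkM^0(C_L)$, so you cannot transfer the equality from $C$ to $C_L$. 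Indeed, the two graded cokernels $\K_\calR/\calR t$ and $\K_\calR/\calR t^{d-2}$ have multiplicities $(d-3)\,\rme_\fkM^0(G)$ and $(d-3)\,\rme_\m^0(A)$ respectively, and there is no reason these agree when $A$ is not regular.

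The paper closes this gap by working with the genuine Ulrich cokernel $C$ and extracting a numerical inequality that does not require knowing $C$ explicitly. From $\varphi(1)\notin\fkM\K_{\calR_\fkM}$ one gets the short exact sequence $0\to\fkM\calR_\fkM\to\fkM\K_{\calR_\fkM}\to\fkM C\to 0$, and since $C$ is Ulrich of dimension $d$ one has $\mu(\fkM C)\le d\cdot\mu(C)=d(\mu(\K_\calR)-1)$. Hence
\[
\mu_\calR(\fkM\K_\calR)\;\le\;\mu_\calR(\fkM)+d\bigl(\mu_\calR(\K_\calR)-1\bigr).
\]
Computing both sides from $\K_\calR=\calR t+\cdots+\calR t^{d-2}$ and $\fkM\K_\calR=(\m t,\ldots,\m t^{d-2},It^{d-1})$ gives $(d-2)\mu_A(\m)+\mu_A(I)\le \mu_A(\m)+\mu_A(I)+d(d-3)$, whence $\mu_A(\m)\le d$ and $A$ is regular. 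This is the missing idea in your converse.
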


\begin{proof}
(1) $(c)\Longrightarrow(b)$
By \cite[Proposition 2.2]{GIW}, $G(I)$ is a Gorenstein ring with $\rma(G(I))=1-d=-2$.
Hence, $\calR(I)$ is Gorenstein by the Goto--Shimoda theorem (see \cite{GS}).

\medskip

\noindent
$(b)\Longrightarrow(a)$ This is trivial.

\medskip

\noindent
$(a)\Longrightarrow(c)$ Now suppose that
$\calR$ is an almost Gorenstein graded ring but not Gorenstein.
Then there exists an exact sequence of graded
$\calR$-modules:
\[
0 \to \calR \stackrel{\varphi}{\longrightarrow}   \K_{\calR}(1) \to C \to 0
\]
such that $\mu_{\calR}(C)=\rme_{\fkM}^0(C)$.
As $\varphi(1) \notin \fkM \K_{\calR}(1)$, we may assume
$\varphi(1) =t$. Then
\[
C=A/I \oplus A/I^2 \oplus \cdots\oplus A/I^{d-2} \oplus I/I^{d-1} \oplus \cdots \supset \calR/I^{d-2}\calR(-d+2)
\]
and $\dim C=\dim \calR/I^{d-2}\calR \ (=d)$.
This implies that $\rme_{\fkM}^0(C) \ge \rme_{\fkM}^0(\calR/I^{d-2}\calR)$.
For each prime $P \in \Assh(\calR/I\calR)$, we have
$\dim \calR_P=\height I\calR=1$ and
$I^{d-2}\calR_P \subsetneq \cdots \subsetneq I\calR_P \subsetneq \calR_P$.
Thus, the associative formula implies that
\[
\rme_{\fkM}^0(C) \ge \rme_{\fkM}^0(\calR/I^{d-2}\calR)
\ge \ell_{\calR_P}(\calR_P/I^{d-2}\calR_P) \ge d-2.
\]
On the other hand,
\[
\rme_{\fkM}^0(C) = \mu_{\calR}(C)=\mu_{\calR}(\K_{\calR})-1=(d-2)-1=d-3.
\]
This is a contradiction.

\medskip

(2) First suppose that $A$ is a regular local ring.
Since $I$ is a good ideal, we have that
$\calR$ is a Cohen-Macaulay ring with
$\K_{\calR} \cong \calR t + \calR t^2+\cdots+\calR t^{d-2}$.
Hence, $\calR_{\fkM}$ is an almost Gorenstein local ring by Lemma
\ref{useful}.

Conversely, assume that $\calR_{\fkM}$ is an almost Gorenstein
local ring but not Gorenstein.
We may assume that $d \ge 4$.
Take an exact sequence
\[
0 \to \calR_{\fkM} \to \K_{\calR_{\fkM}} \to C_{\fkM} \to 0
\]
so that $C_{\fkM}$ is an Ulrich $\calR_{\fkM}$-module.
Then we obtain
\[
\mu_{\calR}(\fkM \K_{\calR})
\le \mu_{\calR} (\fkM)+\mu_{\calR}(\fkM C)
\le \mu_{\calR}(\fkM)+d \cdot (\mu_{\calR}(\K_{\calR})-1).
\]
Since $\mu_{\calR}(\K_{\calR}) =d-2$,  $\mu_{\calR}(\fkM)=\mu_A(\m)+\mu_A(I)$ and
\[
\fkM \cdot \K_{\calR}=(\m t,\m t^2, \ldots, \m t^{d-2}, I t^{d-1}),
\]
we obtain
\[
(d-2)\mu_A(\m)+\mu_A(I) \le \mu_A(\m)+\mu_A(I)+d(d-3).
\]
This implies $\mu_A(\m)=d$.
That is, $A$ is a regular local ring, as required.
\end{proof}

\section{Higher-dimensional case
 (Proof of Theorem \ref{Main-RLRpower})}

In this section, we prove Theorem \ref{Main-RLRpower}.
In what follows, let $(A,\m)$ be a regular local ring of dimension $d \ge 2$
with infinite residue class field, and let $\ell \ge 1$ be an integer.

\begin{rem}
First suppose $\ell=1$.
Then $\calR(\m^{\ell}) = \calR(\m)$ is an almost Gorenstein graded ring because the maximal ideals
$\m$ of a regular local ring is a parameter ideal;
see \cite[Theorem 1.3]{GMTY1}.

Next suppose that $d=2$.
Then \cite[Corollary 1.4]{GMTY2} implies that $\calR(\m^{\ell})$ is
an almost Gorenstein graded ring for every $\ell \ge 1$.

Finally, suppose that $\ell =d-1$.
Then $\calR(\m^{d-1})$ is a Gorenstein ring and, thus, it is an
almost Gorenstein graded ring;
see e.g.\ \cite[Proposition 2.3]{GMTY1}.
\end{rem}

Thus, we restrict our attention to the case where $\ell \ge 2$ and
$d \ge 3$ to prove Theorem \ref{Main-RLRpower}.

\begin{prop} \label{AG-local}
Let $\ell \ge 2$ and $d \ge 3$ be integers.
Assume that $\mathcal{R}(\fkm^{\ell})_{\mathfrak{M}}$ is an almost
Gorenstein local ring. Then $\ell$ is a divisor of $d-1$.
\end{prop}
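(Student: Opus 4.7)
The plan is to identify the graded canonical module $\K_\calR$ of $\calR := \calR(\m^\ell)$ explicitly, compute the minimal numbers of $\calR$-generators of $\K_\calR$, $\fkM$, and $\fkM \K_\calR$, and derive $\ell \mid d - 1$ from a numerical inequality forced by the Ulrich hypothesis.

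First, since $A$ is regular local, $\m^\ell$ is a normal $\m$-primary ideal and $\calR$ is a Cohen--Macaulay normal domain of dimension $d+1$ with $\rma(\calR)=-1$. Let $Y=\Proj(\calR)$, the blowup of $\Spec A$ at $\m$, with exceptional divisor $E$; then $\mathcal{O}_Y(1)=\mathcal{O}_Y(-\ell E)$ and $\omega_Y=\mathcal{O}_Y((d-1)E)$, and the standard Serre correspondence yields
\[
 [\K_\calR]_n \;=\; \H^{0}(Y,\omega_Y(n)) \;=\; \m^{\max(0,\,n\ell-d+1)}\qquad (n\ge 1),
\]
with $[\K_\calR]_n=0$ for $n\le 0$. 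Write $d-1=q\ell+r$ with $0\le r<\ell$ (so $\ell\mid d-1\iff r=0$), and put $n_{0}=q+1$, the first degree with $n\ell\ge d$.

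Second, analyzing $\K_\calR/\fkM\K_\calR$ degree by degree: for $1\le n\le q$ one has $[\K_\calR]_n=A$ and $[\fkM\K_\calR]_n=\m$, yielding one new generator ($1\in A$) per degree; in degree $n_{0}$, $[\K_\calR]_{n_{0}}=\m^{\ell-r}$ and $[\fkM\K_\calR]_{n_{0}}$ equals $\m^{\ell}$ (when $r=0$) or $\m^{\ell-r+1}$ (when $r\ge 1$), contributing respectively no new generators or $\binom{\ell-r+d-1}{d-1}$ new generators; no new generators arise for $n>n_{0}$. A parallel analysis of $\fkM\K_\calR/\fkM^{2}\K_\calR$ (carefully tracking the jump at $n_{0}$ and one further contribution in degree $n_{0}+1$ when $r\ge 1$) yields
\[
 \mu_{\calR}(\K_\calR)=q+\alpha_{r},\qquad \mu_{\calR}(\fkM)=d+N,\qquad \mu_{\calR}(\fkM\K_\calR)=qd+\beta_{r},
\]
where $N:=\binom{\ell+d-1}{d-1}$, $\alpha_{0}=0$ and $\alpha_{r}=\binom{\ell-r+d-1}{d-1}$ for $r>0$, while $\beta_{0}=N$ and $\beta_{r}=\binom{\ell-r+d}{d-1}+\binom{2\ell-r+d-1}{d-1}$ for $r>0$.

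Third, if $\calR_{\fkM}$ is almost Gorenstein local, take $0\to \calR_{\fkM}\to \K_{\calR_{\fkM}}\to C\to 0$ with $C$ Ulrich. Since $\K_\calR$ and $\calR$ both have rank one, $C$ has rank zero and $\dim C\le d$; the Ulrich condition then gives $\mu_{\calR}(\fkM C)\le \dim(C)\cdot\mu_{\calR}(C)\le d\bigl(\mu_{\calR}(\K_\calR)-1\bigr)$. Arguing exactly as in the proof of Theorem~\ref{high-good}(2),
\[
 \mu_{\calR}(\fkM\K_\calR)\;\le\;\mu_{\calR}(\fkM)+d\bigl(\mu_{\calR}(\K_\calR)-1\bigr),
\]
i.e.\ $\beta_{r}\le N+d\,\alpha_{r}$. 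The case $r=0$ is equality. For $r\ge 1$, set $m=\ell-r+d-1$; Pascal's identity gives $\binom{m+1}{d-1}-d\binom{m}{d-1}=-(\ell-r)\binom{m}{d-2}$, and telescoping yields $\binom{m+\ell}{d-1}-\binom{m+r}{d-1}=\sum_{j=m+r}^{m+\ell-1}\binom{j}{d-2}$ (using $m+r=\ell+d-1$). The inequality therefore reduces to
\[
 \sum_{j=m+r}^{m+\ell-1}\binom{j}{d-2}\;\le\;(\ell-r)\binom{m}{d-2}.
\]
But $r\ge 1$ forces $j>m$ for every summand, and $\binom{j}{d-2}$ is strictly increasing in $j$ for $j\ge d-2$ (which holds since $m\ge d$), so each summand strictly exceeds $\binom{m}{d-2}$, contradicting the inequality. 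Hence $r=0$, i.e.\ $\ell\mid d-1$.

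The main technical obstacle is step two: the careful graded-module bookkeeping needed to establish the formulas for $\mu_{\calR}(\K_\calR)$ and $\mu_{\calR}(\fkM\K_\calR)$, tracking the structural jump at $n_{0}$ and verifying that no further generators arise beyond degree $n_{0}+1$.
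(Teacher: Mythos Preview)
Your proof is correct and follows essentially the same strategy as the paper: both compute $[\K_\calR]_n=\m^{\max(0,\,n\ell-d+1)}$, read off $\mu_\calR(\K_\calR)$ and $\mu_\calR(\fkM\K_\calR)$, extract from the Ulrich condition the inequality $\mu_\calR(\fkM\K_\calR)\le \mu_\calR(\fkM)+d\bigl(\mu_\calR(\K_\calR)-1\bigr)$, and then reduce to the same binomial identity (your telescoping sum is exactly the computation in the paper's Lemma~\ref{Ineq}) to force $\ell\mid d-1$. The only cosmetic differences are your geometric derivation of $\K_\calR$ via the blowup and Serre correspondence (the paper instead cites \cite{GI} and \cite{HHK}) and your parametrization $(q,r)$ in place of the paper's $(b,i)$ with $i=d-2-b\ell$.
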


\begin{proof}
The graded canonical module of the Rees algebra
$\mathcal{R}(\m)$ is given  by
\[
\K_{\mathcal{R}(\m)} \cong
At + At^2 + \cdots + At^{d-2} +
\sum_{n \ge d-1} \m^{n-d+1} t^n;
\]
see e.g.\ \cite[Lemma 5.1]{GI}.
This formula and \cite[Proposition 2.5]{HHK} imply
\[
\K_{\mathcal{R}(\m^{\ell})} \cong
\sum_{n=1}^b At^n + \sum_{n\ge b+1}
\m^{n\ell-d+1}t^n, \quad
\text{where}\ b= \big\lfloor \textstyle{\frac{d-2}{\ell}} \big\rfloor=
\big\lceil \textstyle{\frac{d-1}{\ell}} \big\rceil -1.
\]

Now, set $J=\m^{(b+1)\ell-d+1}$,
$\mathcal{R}=\mathcal{R}(\m^{\ell})$ and
$\mathfrak{M}=\mathfrak{m}\mathcal{R}+\mathcal{R}_{+}$.
Then $\K:=\K_{\mathcal{R}(\m^{\ell})}$ is generated by
$t,t^2,\ldots,t^{b}$ and $Jt^{b+1}$ as an $\mathcal{R}$-module.
Hence,
\begin{equation} \label{eq:genK}
\mu_{\mathcal{R}}(\K)=b+\mu_A(J).
\end{equation}

Similarly, since $\mathfrak{M}\K$ is generated by
$\mathfrak{m}t, \mathfrak{m}t^2,\ldots,\mathfrak{m}t^{b}$,
$\mathfrak{m}Jt^{b+1}$ and $\mathfrak{m}^{\ell}Jt^{b+2}$,
we have
\begin{equation} \label{eq:genMK}
\mu_{\mathcal{R}}
(\mathfrak{M}\K)=b \cdot
\mu_A(\mathfrak{m})+\mu_A(\mathfrak{m}J) +
\mu_A(\mathfrak{\m}^{\ell}J).
\end{equation}

Now assume that $\mathcal{R}_{\mathfrak{M}}$
is an almost Gorenstein local ring.  Then we must prove the following claim.

\medskip

\begin{flushleft}
{\it Claim.} We claim that
$\mu_A(\m J) + \mu_A(\m^{\ell}J) \le \mu_A(\m^{\ell})+d \cdot \mu_A(J)$.
\end{flushleft}

\medskip

Now we prove the claim.
We consider the exact sequence
\[
0 \to \calR_{\fkM} \to \rmK_{\calR_\fkM} \to C \to 0
\]
of $\calR_{\fkM}$-modules with $\mu_{\calR_{\fkM}}(C) =
\e^0_{\fkM\calR_{\fkM}}(C)$.
Then \cite[Corollary 3.10]{GTT} implies that
\[
\mu_{\mathcal{R}_{\fkM}}(C)=\mu_{\mathcal{R}_{\fkM}}(\K_{\calR_{\fkM}})-1 = \mu_{\calR}(\rmK_{\calR})-1
\]
and
\[
0 \to \mathfrak{M}\calR_{\fkM} \to \mathfrak{M} \K_{\calR_{\fkM}} \to \mathfrak{M} C \to 0
\]
is exact.
Moreover, as $C$ is an Ulrich $\mathcal{R}_{\fkM}$-module and
$A$ is regular,  we have
\begin{eqnarray*}
\mu_{\mathcal{R}}(\mathfrak{M}\K) = \mu_{\calR_{\fkM}}(\mathfrak{M} \K_{\calR_{\fkM}})
&\le &
\mu_{\mathcal{R}_{\fkM}}(\mathfrak{M}\calR_{\fkM}) +
\mu_{\mathcal{R}_{\fkM}}(\mathfrak{M}C) \\
& \le & \mu_{\mathcal{R}}(\mathfrak{M}) +
d \cdot \mu_{\mathcal{R}_{\fkM}}(C) \\
& = & \mu_A(\m) + \mu_A(\m^{\ell})+d \cdot (\mu_{\mathcal{R}}(\K)-1) \\
&=& \mu_A(\m^{\ell}) +
d \cdot \mu_{\mathcal{R}}(\K).
\end{eqnarray*}


By substituting $(\ref{eq:genK})$ and $(\ref{eq:genMK})$ for this,
we obtain the desired claim.

Note that $\mu_A(\m^k)={k+d-1 \choose d-1}$ and
$J=\m^{(b+1)\ell-d+1}$.
Using the above claim, we have
\begin{equation}
{(b+1)\ell+1 \choose d-1} + {(b+2)\ell \choose d-1}
\le
{\ell+d-1 \choose d-1} + d \cdot {(b+1)\ell \choose d-1}.
\end{equation}
The opposite inequality follows from the following lemma and, thus,
$\ell$ is a divisor of $d-1$.
\end{proof}

\begin{lem} \label{Ineq}
Let $\ell \ge 2$ and $d \ge 3$ be integers and set $b= \big\lfloor \textstyle{\frac{d-2}{\ell}} \big\rfloor$. Then
\begin{equation} \label{eq:OppIneq}
{(b+1)\ell+1 \choose d-1} + {(b+2)\ell \choose d-1}
\ge
{\ell+d-1 \choose d-1} + d \cdot {(b+1)\ell \choose d-1}
\end{equation}
holds true.
In addition, equality holds true if and only if $\ell$ is a divisor of $d-1$.
\end{lem}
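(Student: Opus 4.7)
The plan is to reparametrize the inequality and reduce it to a transparent termwise comparison of binomial coefficients. Set $N = d-1$ and $s = (b+1)\ell - N$. From $b = \lfloor (d-2)/\ell\rfloor$ one has $b\ell \le d-2 < (b+1)\ell$, so $s$ is an integer with $0 \le s \le \ell - 1$, and $s = 0$ if and only if $\ell \mid d-1$. Expressed in terms of $N$ and $s$, the binomials appearing in (\ref{eq:OppIneq}) become $\binom{N+s+1}{N}$, $\binom{N+s+\ell}{N}$, $\binom{N+\ell}{N}$, and $\binom{N+s}{N}$, so that (\ref{eq:OppIneq}) takes the form
\[
\binom{N+s+\ell}{N} - \binom{N+\ell}{N} \ge (N+1)\binom{N+s}{N} - \binom{N+s+1}{N}.
\]

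Next I would rewrite both sides in terms of $\binom{\cdot}{N-1}$. Pascal's identity telescopes the left-hand side to
\[
\binom{N+s+\ell}{N} - \binom{N+\ell}{N} = \sum_{j=0}^{s-1}\binom{N+\ell+j}{N-1}.
\]
For the right-hand side, factoring out $\binom{N+s}{N}$ and simplifying $(N+1) - (N+s+1)/(s+1) = Ns/(s+1)$, together with the identity $\frac{N}{s+1}\binom{N+s}{N} = \binom{N+s}{N-1}$, gives
\[
(N+1)\binom{N+s}{N} - \binom{N+s+1}{N} = s\binom{N+s}{N-1}.
\]
Thus the lemma reduces to the compact assertion
\[
\sum_{j=0}^{s-1}\binom{N+\ell+j}{N-1} \ge s\binom{N+s}{N-1},
\]
with equality iff $s = 0$.

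The main obstacle is really the reformulation above; once it is in place, the reduced inequality follows by inspection. Indeed, since $s \le \ell - 1$, for every $j \ge 0$ one has $\ell + j \ge \ell > s$, and because $N - 1 = d - 2 \ge 1$ the binomial $\binom{\cdot}{N-1}$ is strictly increasing on integers $\ge N-1$; hence each summand satisfies $\binom{N+\ell+j}{N-1} > \binom{N+s}{N-1}$. Summing the $s$ strict inequalities yields the reduced inequality, strict whenever $s \ge 1$, while both sides vanish when $s = 0$. Tracing the reparametrization back gives (\ref{eq:OppIneq}) and confirms that equality holds precisely when $\ell \mid d-1$.
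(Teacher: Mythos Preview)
Your proof is correct and is essentially the same argument as the paper's, under the change of variables $N=d-1$ and $s=\ell-1-i$ (where the paper sets $i=d-2-b\ell$): both telescope via Pascal's identity and use the relation $(d-1)\binom{(b+1)\ell}{d-1}=\big((b+1)\ell-d+2\big)\binom{(b+1)\ell}{d-2}$ (equivalently your $\frac{N}{s+1}\binom{N+s}{N}=\binom{N+s}{N-1}$) to reduce the claim to the termwise comparison $\sum_{j=0}^{s-1}\binom{N+\ell+j}{N-1}\ge s\binom{N+s}{N-1}$, which is precisely the paper's final displayed sum after reindexing.
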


\begin{proof}
Set $i=d-2-b\ell$. Then $0 \le i \le \ell-1$. Note that
\[
{n \choose m} = {n-1 \choose m} + {n-1 \choose m-1} \quad
\text{if $n, m \ge 2$}.
\]
By substituting the above equality, we have
\begin{eqnarray*}
{(b+2)\ell \choose d-1}
&=&
{(b+1)\ell+\ell-1 \choose d-1}  + {(b+1)\ell+\ell-1 \choose d-2}\\
& = & {(b+1)\ell+\ell-2 \choose d-1}
 + {(b+1)\ell+\ell-2 \choose d-2}
+ {(b+1)\ell + \ell-1 \choose d-2} \\
&=& \cdots = {(b+1)\ell+i+1 \choose d-1} +
\sum_{j=i+1}^{\ell-1} {(b+1)\ell+j \choose d-2} \\
&=& {\ell + d-1 \choose d-1} + \sum_{j=i+1}^{\ell-1} {(b+1)\ell+j \choose d-2}.
\end{eqnarray*}
On the other hand,
\begin{eqnarray*}
d \cdot {(b+1)\ell \choose d-1}
&=& {(b+1)\ell \choose d-1} + (d-1){(b+1)\ell \choose d-1} \\
&=& {(b+1)\ell \choose d-1} + \big((b+1)\ell-d+2\big){(b+1)\ell \choose d-2} \\
&=& {(b+1)\ell \choose d-1} +(\ell-i) {(b+1)\ell \choose d-2} \\
&=& {(b+1)\ell+1 \choose d-1} + (\ell-i-1) {(b+1)\ell \choose d-2}.
\end{eqnarray*}
Thus,
\begin{eqnarray*}
&& {(b+1)\ell+1 \choose d-1} + {(b+2)\ell \choose d-1} - \left\{
{\ell+d-1 \choose d-1} + d \cdot {(b+1)\ell \choose d-1}\right\} \\
&& \\
&& \ \ =
\sum_{j=i+1}^{\ell-1} \left\{ {(b+1)\ell+j \choose d-2} -  {(b+1)\ell \choose d-2} \right\} \ge 0
\end{eqnarray*}
because $i \le \ell-1$.
Therefore, the equality of $(\ref{eq:OppIneq})$ holds if and only if  $i=\ell -1$.
\end{proof}

To complete the proof of
Theorem \ref{Main-RLRpower},
we give the following theorem.

\begin{thm} \label{NotGraded}
Suppose that $\ell \ge 2$, $d \ge 3$, and $\ell\mid  d-1$.
Set $\calR=\calR(\m^{\ell})$.
Then:
\begin{enumerate}
\item[$(1)$] $\calR_{\fkM}$ is an almost Gorenstein local ring;
\item[$(2)$] if $\ell \ne d-1$, then
$\calR$ is {\em not} an almost Gorenstein graded ring.
\end{enumerate}
\end{thm}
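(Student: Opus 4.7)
The plan splits into two independent parts.

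For (1), set $k = (d-1)/\ell$. Because $\ell \mid d-1$, the floor $b = \lfloor(d-2)/\ell\rfloor$ equals $k-1$ and $(b+1)\ell - d + 1 = 0$, so the formula for $\K_\calR$ recalled in the proof of Proposition~\ref{AG-local} gives $[\K_\calR]_n = A t^n$ for $1 \le n \le k$ and $[\K_\calR]_n = \m^{(n-k)\ell} t^n$ for $n > k$. Collecting generators one obtains $\K_\calR = \sum_{i=1}^{k} \calR \cdot t^i$. Since $\ell \mid d-1$, the associated graded ring $G(\m^\ell)$ is Gorenstein by \cite[Theorem~2.4]{Hy}, hence $\calR$ is Cohen-Macaulay. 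Lemma~\ref{useful} applied with $c = k-1 \ge 0$ then yields (1).

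For (2), assume $\ell \ne d-1$, so $k \ge 2$, and suppose for contradiction that $\calR$ is almost Gorenstein graded. Since $\rma(\calR) = -1$, there is a graded short exact sequence
\[
0 \to \calR \xrightarrow{\varphi} \K_\calR(1) \to C \to 0
\]
with $C$ Ulrich. By \cite[Corollary~3.10]{GTT}, $\mu_\calR(C) = \mu_\calR(\K_\calR) - 1 = k-1$ and $\varphi(1) \notin \fkM\K_\calR(1)$. Since $[\K_\calR(1)]_0 = A t$ with $\m A t$ as the degree-$0$ part of $\fkM\K_\calR(1)$, after rescaling by a unit I may take $\varphi(1) = t$. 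A direct computation of $C_n = [\K_\calR]_{n+1}/\m^{n\ell} t^{n+1}$ gives $C_n \cong A/\m^{n\ell}$ for $1 \le n \le k-1$ and $C_n \cong \m^{(n-k+1)\ell}/\m^{n\ell}$ for $n \ge k$.

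The crux is to show $\e^0_\fkM(C) > k-1$, contradicting the Ulrich condition. A direct graded check shows that $\Ann_\calR(\overline{t^2}) = \m^\ell\calR$, where $\overline{t^2} \in C_1$ denotes the class of $t^2 \in [\K_\calR]_2$, so $N := \calR \cdot \overline{t^2}$ is a cyclic submodule isomorphic to $(\calR/\m^\ell\calR)(-1)$. To compute $\e^0_\fkM(\calR/\m^\ell\calR)$, I would filter by the $\m$-adic powers $\m^\ell\calR \subset \m^{\ell-1}\calR \subset \cdots \subset \m\calR \subset \calR$. Each successive quotient $\m^i\calR/\m^{i+1}\calR$ is annihilated by $\m\calR$, hence is a graded module over $\calR/\m\calR \cong G(\m)^{(\ell)}$, the $\ell$-th Veronese of the polynomial ring $G(\m) = k[x_1,\ldots,x_d]$, with $n$-th piece identified with the $k$-vector space of degree-$(n\ell+i)$ forms. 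The resulting Hilbert function $\binom{n\ell+i+d-1}{d-1}$ has leading coefficient $\ell^{d-1}/(d-1)!$, giving multiplicity $\ell^{d-1}$ for each quotient. Additivity along the filtration (all quotients of dimension $d$) yields $\e^0_\fkM(\calR/\m^\ell\calR) = \ell \cdot \ell^{d-1} = \ell^d$.

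Since both $N$ and $C$ are supported on $V(\m\calR)$ and thus have dimension $d$, the inclusion $N \subset C$ forces $\e^0_\fkM(C) \ge \e^0_\fkM(N) = \ell^d$. But $\ell^d \ge 2^d > d > k-1$, contradicting $\e^0_\fkM(C) = \mu_\calR(C) = k-1$. The main technical obstacle is the multiplicity computation: identifying each filtration quotient as a shifted Veronese of $G(\m)$, reading off its multiplicity via a standard Hilbert-function computation, and then summing via additivity of multiplicities across the filtration.
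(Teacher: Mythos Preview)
Your proof is correct and follows essentially the same strategy as the paper's: for (1) both invoke Lemma~\ref{useful} after identifying $\K_\calR = \sum_{i=1}^{b+1}\calR t^i$; for (2) both normalize $\varphi(1)=t$, exhibit a cyclic submodule of $C$ of the form $\calR/I^j\calR$, bound $\e_\fkM^0(C)$ below by $\ell^d$, and contradict $\mu_\calR(C)\le b$. The only differences are cosmetic: you take the submodule $\calR\overline{t^2}\cong\calR/\m^\ell\calR$ while the paper takes $\calR\overline{t^{b+1}}\cong\calR/\m^{b\ell}\calR$, and you compute $\e_\fkM^0(\calR/\m^\ell\calR)=\ell^d$ by an $\m$-adic filtration over $\calR/\m\calR\cong G(\m)^{(\ell)}$ whereas the paper simply cites the standard identity $\e_\fkM^0(G(I))=\e_I^0(A)=\ell^d$.
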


\begin{proof}
Set $b=\frac{d-1}{\ell}-1$.
Then $b \ge 0$ and
\[
\K_{\mathcal{R}} \cong \calR t + \calR t^2 + \cdots
\ + \calR t^b + \mathcal{R}t^{b+1}.
\]

\medskip

\noindent
 (1) This follows from Lemma \ref{useful}.

\medskip

\noindent
(2)
Now suppose that $\mathcal{R}$ is an almost Gorenstein graded ring.
Then there exists a short exact sequence
\[
0 \to \mathcal{R} \stackrel{\varphi}{\to} \K_{\mathcal{R}}(1) \to C \to 0
\]
of graded $\mathcal{R}$-modules so that $C$ is an Ulrich $\mathcal{R}$-module.
Since $\varphi(1)$ is part of a minimal set of generators of $[\K_{\mathcal{R}}]_1$
by \cite[Corollary 3.10]{GTT}, we may assume that $\varphi(1)=t$
without loss of generality.
Then
\[
C=\K_{\mathcal{R}}(1)/\mathcal{R}t \cong \mathcal{R}\overline{t^2} + \cdots + \mathcal{R}\overline{t^{b+1}}
\]
yields that $\mu_{\mathcal{R}}(C) \le b < \frac{d-1}{\ell}$.

If we put $I=\fkm^{\ell}$, then
\[
C =\sum_{n=2}^{\infty}C_n \cong
A/I \oplus A/I^2 \oplus \cdots \oplus A/I^b \oplus I/I^{b+1} \oplus \cdots.
\]
Thus,
\[
C \supset \mathcal{R}\overline{t^{b+1}} \cong \mathcal{R}/I^{b}\mathcal{R}(-(b+1)),
\]
and, hence,
\[
\rme_{\mathfrak{M}}^0(C)
\ge \rme_{\mathfrak{M}}^0(\mathcal{R}/I^{b}\mathcal{R})
\ge \rme_{\mathfrak{M}}^0(\mathcal{R}/I \mathcal{R})
=\rme_{\mathfrak{M}}^0(G(I))
=\rme_{I}^0(A)=\ell^d \cdot \rme_{\fkm}^0(A)=\ell^d.
\]
As $\ell \ge 2$ and $b \ge 1$, we have $\ell^d > \frac{d}{\ell} > \frac{d-1}{\ell}=b+1$.
Therefore, $\rme_{\mathfrak{M}}^0(C) > b+1 \ge \mu_{\mathcal{R}}(C)$,
which contradicts the assumption.
\end{proof}

In Table~\ref{tab1} we present part of the list of
$(d,\ell)$ for which, over a $d$-dimensional regular local ring $(A, \m)$, the Rees algebra $\calR(\m^{\ell})$ is a Gorenstein ring ({\bf Gor}),
an almost Gorenstein graded ring ({\bf AG}),
or $\calR(\m^{\ell})_{\fkM}$ is an almost Gorenstein local
ring ({\bf AGL}).

\begin{table}[htb]
\begin{center}
\caption{When is $\calR(\m^{\ell})$ almost Gorenstein?\label{tab1}}
{\small
\begin{tabular}{|c||c|c|c|c|c|c|c|c|c|} \hline
$d\setminus \ell$ & 1 & 2 & 3 & 4 & 5 & 6 & 7 & 8 & 9 \\ \hline \hline
2 & {\bf Gor}& {\bf AG} & {\bf AG} & {\bf AG} & {\bf AG} & {\bf AG} & {\bf AG} & {\bf AG}& {\bf AG} \\ \hline
3 & {\bf AG} & {\bf Gor} &X & X & X & X & X & X & X \\ \hline
4 & {\bf AG} &  X   &{\bf Gor}& X & X & X & X & X & X \\ \hline
5 & {\bf AG} & {\bf AGL} &X & {\bf Gor}& X & X & X & X & X \\ \hline
6 & {\bf AG} & X  &X & X & {\bf Gor}& X & X & X & X \\ \hline
7 & {\bf AG} & {\bf AGL}    &  {\bf AGL}& X & X & {\bf Gor}& X & X & X \\ \hline
8 & {\bf AG} &  X &X & X & X & X & {\bf Gor}& X & X \\ \hline
9 & {\bf AG} &  {\bf AGL}    &X &  {\bf AGL}& X & X & X & {\bf Gor}& X \\ \hline
10& {\bf AG} & X  &  {\bf AGL}  & X & X & X & X & X & {\bf Gor}\\ \hline
\end{tabular}
}
\end{center}
\end{table}

\begin{acknowledgement}
  We thank the referee for a careful reading of the original manuscript.
We would like to thank Editage (www.editage.jp) for English language editing.
The first author was partially supported by JSPS Grant-in-Aid for Scientific Research 16K05112.
The second author was partially supported by JSPS Grant-in-Aid for Scientific Research 26400054. The third author was partially supported by JSPS Grant-in-Aid for Young Scientists (B) 17K14176. The fourth author was partially supported by JSPS Grant-in-Aid for Scientific Research 16K05110.
\end{acknowledgement}


\end{document}